\documentclass[11pt, a4paper]{amsart}
\usepackage[english]{babel}
\usepackage{fancyhdr}
\usepackage{latexsym, amssymb, amsmath, amscd, amscd, amsthm, amsxtra}
\usepackage [latin1]{inputenc}
 \usepackage{graphicx}
\usepackage[all]{xy}
\usepackage{placeins}
\usepackage{setspace}
\usepackage{hyperref}

\topmargin 0pt \oddsidemargin 5pt\evensidemargin 0pt \makeindex
\textwidth 160mm \textheight 244mm

\parindent 30pt
\parskip 2mm
\setstretch{1.2}

\usepackage{fancyhdr}
\usepackage{fancybox}
\usepackage{latexsym, amssymb, amsmath, amscd, amscd, amsthm, amsxtra}
\usepackage{subfigure}
\usepackage{graphicx}
\usepackage[all]{xy}

\usepackage{amsthm}
 \usepackage{indentfirst}
  \setlength{\parindent}{2em}

\theoremstyle{plain}
\newtheorem{prop}{Proposition}[section]
\newtheorem{lem}[prop]{Lemma}

\newtheorem{cor}[prop]{Corollary}
\newtheorem{thm}[prop]{Theorem}

\newtheorem{conjecture}[prop]{Conjecture}

\newtheorem*{prop*}{Proposition}
\newtheorem*{lem*}{Lemma}
\newtheorem*{sublem*}{Sublemma}
\newtheorem*{cor*}{Corollaire}
\newtheorem*{thm*}{Theorem}
\newtheorem*{hypo*}{Hypothesis}
\newtheorem*{question*}{Question}
\newtheorem*{conjecture*}{Conjecture}
\newtheorem*{scholum*}{Scholum}

\newtheorem*{defn*}{Definition}

\theoremstyle{slanted}

\newtheorem*{ex*}{Example}
\newtheorem*{exs*}{Examples}

\newtheorem*{rmk*}{Remark}
\newtheorem*{rmks*}{Remarks}

\newtheorem*{notation*}{Notation}

\theoremstyle{definition}

\newtheorem*{con*}{Construction}
\newtheorem{rmk}[prop]{Remark} 
\theoremstyle{remark}

\newtheorem*{warning*}{Warning}
\newtheorem*{shortnote*}{Note}
\newtheorem*{claim*}{Claim}
\newtheorem*{axiom*}{Axiom}
\newtheorem*{ack}{Acknowledgment}

\begin{document}

\title{Syzygies of general projections of canonical and paracanonical curves}

\author{Li Li}
\thanks{The author is funded by the Deutsche Forschungsgemeinschaft (DFG, German Research
Foundation) under Germany's Excellence Strategy - The Berlin Mathematics
Research Center MATH+ (EXC-2046/1, project ID: 390685689)}
\date{}

\newcommand{\Addresses}{{
  \bigskip
  \footnotesize

  \textsc{Humboldt-Universit\"{a}t zu Berlin, Institut f\"{u}r Mathematik, Unter den Linden 6, 10099 Berlin, Germany}\par\nopagebreak
  \textit{E-mail address}, \texttt{lili@hu-berlin.de}

}}

\maketitle

\begin{abstract}
Let $X\subset\mathbb{P}^r$ be an integral linearly normal variety and $R=k[x_0,\cdots,x_r]$ the coordinate ring of $\mathbb{P}^r$. It is known that the syzygies of $X$ contain some geometric information. In recent years the syzygies of non-projectively normal varieties or in other words, the projection $X'$ of $X$ away from a linear subspace $W\subset\mathbb{P}^r$, were taken into considerations. Assuming that the coordinate ring of the ambient space that $X'$ lives in is $S$, there are two types of vanishing properties of the Betti diagrams of the projected varieties, the so-called $N_{d,p}^S$ and $\widetilde{N}_{d,p}$. The former one have been widely discussed for general varieties, for example by S. Kwak, Y. Choi and E. Park, while the latter one was discussed by W. Lee and E. Park for curves of very large degree.

In this paper I will discuss about the $\widetilde{N}_{d,p}$ properties of the projection of a generic canonical and paracanonical curve away from a generic point and in particular whether they are cut out by quadrics. Some conjectures will be claimed based on the tests on $\emph{Macaulay2}$.
\end{abstract}
\section{\label{S1}Introduction}
Let $X$ be a projective variety embedded in the projective space $\mathbb{P}^r=\mathbb{P}H^0(X,L)^\vee$ over an algebraically closed field $k$ via the complete linear system of a very ample line bundle $L$. Let $R=k[x_0,\cdots,x_r]$ be the coordinate ring of $\mathbb{P}^r$. For the $R$-module $R(X)=\bigoplus\limits_{t\geq0}H^0(X,\mathcal{O}_X(t))$, there is a minimal free resolution:
$$\cdots\to R_2\to R_1\to R_0=R\to R(X)\to0,$$
where $R_i=\bigoplus\limits_{j\geq1}R(-i-j)^{b_{i,j}}$. It is well known(for example, in \cite{ES05} and \cite{AN}) that $b_{i,j}=\dim\mathrm{Tor}_i(R(X),k)_{i+j}$, where the space $\mathrm{Tor}_i(R(X),k)_{i+j}$ can be identified as the Koszul cohomology:
$$\bigwedge\limits^{i+1}H^0(X,L)\otimes R(X)_{j-1}\to\bigwedge\limits^iH^0(X,L)\otimes R(X)_j\to\bigwedge\limits^{i-1}H^0(X,L)\otimes R(X)_{j+1},$$
the morphism $\bigwedge\limits^iH^0(X,L)\otimes R(X)_j\to\bigwedge\limits^{i-1}H^0(X,L)\otimes R(X)_{j+1}$ for arbitrary $i,j$ being defined to be
$$v_1\wedge\cdots \wedge v_i\otimes P\mapsto\sum\limits_{t=1}^i(-1)^tv_1\wedge\cdots\wedge \widehat{v_t}\wedge\cdots\wedge v_i\otimes v_tP.$$
The $\mathrm{Tor}$ group $\mathrm{Tor}_i(R(X),k)_{i+j}$ is also called the Koszul cohomology group, denoted by $K_{i,j}(X,L)$. We say that $X$ satisfies $N_{d,m}$ if $b_{i,j}=0$ for $1\leq i\leq m$ and $j\geq d$ and $N_m$ for short if $d=2$.

Assume that $V\subset H^0(X,L)$ is a generic hyperplane, which induces a projection 
$$\pi_p:\mathbb{P}H^0(X,L)^\vee\dashrightarrow\mathbb{P}V^\vee$$
 away from a point $p\in\mathbb{P}^r$. Denote $S:=\mathrm{Sym}^*V$ to be the coordinate ring of $\mathbb{P}V$. The intersection ideal $I(X'):=I(X)\cap S$ defines the projection $\pi_p:X\to X'$ where the notation of restriction is omitted if no ambiguity is led to. By the genericity of $V$, it means that we are interested in the outer projection i.e. $p\notin X$ and $\pi_p$ is an isomorphism.
 
There are two ways of considering the syzygies of $X'$. First, consider the $S$-module 
$$M=\bigoplus\limits_{t\geq0}H^0(X',\mathcal{O}_{X'}(t)),$$ 
as in \cite{KP} and \cite{non-complete}. There is a resolution
$$\cdots\to S_2\to S_1\to S\oplus S(-1)\to M\to0,$$
where $S_i=\bigoplus\limits_{j\geq1}S(-i-j)^{b_{i,j}^S}$. In this case, we have $b_{i,j}^S=\dim\mathrm{Tor}_i(M,k)_{i+j}$, where $\mathrm{Tor}_i(M,k)_{i+j}$ is computed as the Koszul cohomology:
$$\bigwedge\limits^{i+1}V\otimes M_{j-1}\to\bigwedge\limits^iV\otimes M_j\to\bigwedge\limits^{i-1}V\otimes M_{j+1},$$
where the differentials are similarly defined as in the linearly normal situation. We denote this Koszul cohomology by $K_{i,j}^S(X,L;V)$. We say that $X'\subset\mathbb{P}(V)$ satisfies the property $N_{d,m}^S$ if $b_{i,j}=0$ for $1\leq i\leq m$ and $j\geq d$. Correspondingly the property $N_m^S$ is satisfied if the first $m$ steps of the resolution above are linear, meaning that $S_i=S(-i-1)^{b_{i,1}^S}$ for $1\leq i\leq m$. 

Another consideration is to look at the coordinate ring $S(X')$ of $X'\subset\mathbb{P}(V)$. Then 
$$S(X')=k\oplus V\oplus(\bigoplus\limits_{t\geq2}H^0(X',\mathcal{O}_{X'}(t)))$$
 and it is seen as an $S$-module. There is a free resolution of $S(X')$ as follows:
$$\cdots\to \widetilde{S}_2\to \widetilde{S}_1\to S\to S(X')\to0,$$
where $\widetilde{S_i}=\bigoplus\limits_{j\geq1}S(-i-j)^{\widetilde{b}_{i,j}}$. In this case, $\widetilde{b}_{i,j}=\dim\mathrm{Tor}_i(S(X'),k)_{i+j}$, where the associated Koszul cohomology is that of the differentials: 
$$\bigwedge\limits^{i+1}V\otimes S(X')_{j-1}\to\bigwedge\limits^iV\otimes S(X')_j\to\bigwedge\limits^{i-1}V\otimes S(X')_{j+1},$$
where the differentials are similarly defined as the previous. We denote the Koszul cohomology group in this case by $\widetilde{K}_{i,j}(X,L;V)$. As defined in \cite{Birk}, the property $\widetilde{N}_{d,m}$ means that $\widetilde{b}_{i,j}=0$ for $1\leq i\leq m$ and $j\geq d$ and there is corresponding $\widetilde{N}_m$ if $d=2$.
One should note that for $j=0$ and $j\geq2$, we have $S(X')_j=H^0(X',\mathcal{O}_{X'}(j))$ but for $j=1$, $S(X')_1=V$, which is not a cohomological description. This discordance is the main difficulty in computing the syzygies.

The normality of projected varieties was discussed by A. Alzati and F. Russo in \cite{AR}. Syzygy problems about $b_{i,j}^S$ were widely investigated by Y. Choi, S. Kwak and E. Park in \cite{KP} and \cite{non-complete}. The problems about $\widetilde{b}_{i,j}$ for curves of large degrees were solved by W. Lee and E. Park in \cite{LP}.  In this paper, I focus on the general projections of canonical curves and paracanonical curves for the $\widetilde{b}_{i,j}$ problems. Notice that if the projection center $p$ is on the curve, then the problem coincides with the inner projection, which was investigated in \cite{CHOI2006859} and \cite{Han2008AnalysisOS}.  Taking the canonical curve $C\subset\mathbb{P}^{g-1}$ as an example. Either inner or general outer projected curve in $\mathbb{P}^{g-2}$ is isomorphic to $C$. However, we cannot solve the problem of general outer projection by deforming to the case of inner projection because the inner projection and the general outer projection of the curve have different Hilbert polynomials, indicating that such a family of curves is not flat (\cite{ACG} IX Proposition 2.2). Concretely speaking, the pullback of the hyperplane section to the inner projected curve is $\omega_C(-p)$, while it is $\omega_C$ in the outer projection case. The inner projections are avoided by taking the projection center to be general. 

For a generic canonical curve $X=C\subset\mathbb{P}^{g-1}=\mathbb{P}H^0(C,\omega_C)$ of genus $g\geq6$, we have the following basic results:
\begin{thm}Let $C\subset\mathbb{P}^{g-1}$ be a generic canonical curve of genus $g\geq6$. For a generic $p\in\mathbb{P}^{g-1}$, let $C'$ be the projection of $C$ away from $p$ to a hyperplane $H=\mathbb{P}V$. We have\\
	(1)$\dim I_{C'}(2)=\frac{(g-1)(g-6)}{2}$. \\
	(2)$C'$ is $2$-normal, meaning that the multiplication map $\mathrm{Sym}^2V\to H^0(\omega_C^2)$ is surjective.
\end{thm}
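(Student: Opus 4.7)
The first observation is that (1) and (2) are equivalent: from the exact sequence
$$0\to I_{C'}(2)\to\mathrm{Sym}^{2}V\xrightarrow{\mu}H^{0}(\omega_{C}^{2})\to\mathrm{coker}(\mu)\to 0,$$
together with $\dim V=g-1$ and $h^{0}(\omega_{C}^{2})=3g-3$, one reads $\dim I_{C'}(2)=\binom{g}{2}-(3g-3)+\dim\mathrm{coker}(\mu)$, so (2) immediately yields the dimension formula in (1). Conversely, a generic canonical curve is non-hyperelliptic, so Max Noether's theorem gives the surjection $\mathrm{Sym}^{2}H^{0}(\omega_{C})\twoheadrightarrow H^{0}(\omega_{C}^{2})$ with kernel $I_{C}(2)$ of dimension $\frac{(g-2)(g-3)}{2}$; the subspace-intersection bound then forces $\dim I_{C'}(2)\geq\frac{(g-1)(g-6)}{2}$ (meaningful precisely for $g\geq 6$), which gives the reverse direction. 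So it suffices to prove (2).

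To do so, I would fix a complement $s\in H^{0}(\omega_{C})\setminus V$, yielding the decomposition $\mathrm{Sym}^{2}H^{0}(\omega_{C})=\mathrm{Sym}^{2}V\oplus V\cdot s\oplus\langle s^{2}\rangle$. Combined with the Noether surjection, (2) becomes the identity $\mathrm{Sym}^{2}V+I_{C}(2)=\mathrm{Sym}^{2}H^{0}(\omega_{C})$, equivalently the surjectivity of the polar map
$$\phi_{p}:I_{C}(2)\longrightarrow H^{0}(\omega_{C}),\qquad Q_{VV}+Ls+cs^{2}\longmapsto L+cs,$$
whose kernel is exactly $I_{C'}(2)$. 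The whole theorem therefore reduces to showing $\phi_{p}$ is surjective for a generic $p$.

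For this surjectivity, the plan is to use upper semicontinuity of $\dim\mathrm{coker}(\phi_{p})$: it is enough to exhibit one $p$ at which $\phi_{p}$ has rank $g$. Dually, $\phi_{p}$ fails to surject precisely when there exists $q\in\mathbb{P}^{g-1}$ with $B_{Q}(p,q)=0$ for every $Q\in I_{C}(2)$; the locus of such polar-conjugate pairs forms a symmetric variety $\Sigma\subset\mathbb{P}^{g-1}\times\mathbb{P}^{g-1}$ cut out by the $\frac{(g-2)(g-3)}{2}$ bilinear forms attached to a basis of $I_{C}(2)$, with expected dimension $2(g-1)-\frac{(g-2)(g-3)}{2}$. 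This is strictly less than $g-1$ precisely when $g\geq 6$, so, provided $\Sigma$ attains the expected codimension, the first projection $\Sigma\to\mathbb{P}^{g-1}$ is not dominant and a generic $p$ lies outside its image.

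The main obstacle is to verify that the bilinear equations cutting out $\Sigma$ are independent enough --- i.e.\ that $\Sigma$ has the expected codimension away from the obvious tangent-incidence components $\{(p,q):p\in C,\ q\in T_{p}C\}$ --- for a generic canonical curve of genus $g\geq 6$. I expect to secure this either by appealing to Petri's theorem, which for such generic $C$ guarantees that $I_{C}$ is generated by $I_{C}(2)$ and that $I_{C}(2)$ cuts out $C$ scheme-theoretically, providing the required non-degeneracy of the polar pairing; or, should this synthetic route prove elusive, by specializing to a nodal or reducible canonical curve at which the rank of $\phi_{p}$ can be computed by hand and then propagating the conclusion back by semicontinuity.
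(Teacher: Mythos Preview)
Your reduction is essentially the paper's: the surjectivity of your polar map $\phi_p$ is exactly the condition that the Jacobian matrix of a basis of $I_C(2)$ has full rank $g$ at $p$, which is what the paper verifies. Indeed, with $p=[1:0:\cdots:0]$ and $s=x_0$, the vector $\phi_p(Q)$ is the gradient $\nabla Q(p)$ up to a harmless factor of $2$ in one coordinate. So the equivalence of (1) and (2), the identification $\ker\phi_p=I_{C'}(2)$, and the reduction of (2) to the surjectivity of $\phi_p$ are all correct and match the paper.

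Where you diverge is in the last step, and there the proposal has a real gap. The paper argues directly that the locus where the Jacobian has rank $<g$ is a proper determinantal subvariety of $\mathbb{P}^{g-1}$ (and also points to Alzati--Russo for the general statement); this sidesteps any global dimension count on your incidence variety $\Sigma$. Your expected-codimension inequality $2(g-1)-\tfrac{(g-2)(g-3)}{2}<g-1$ does not by itself prove that the first projection $\Sigma\to\mathbb{P}^{g-1}$ fails to dominate, and for $g=6,7$ the margin is thin enough that one genuinely has to work. Your appeal to Petri's theorem, as stated, does not close this: Petri gives that $I_C(2)$ cuts out $C$ scheme-theoretically, which controls the rank of the Jacobian \emph{along} $C$ (namely rank $g-2$ there), but says nothing immediate about the rank at a generic point off $C$. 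Your fallback of specializing to an explicit curve and computing the rank of $\phi_p$ by hand would work, but that is precisely the determinantal approach in disguise; you may as well argue directly that some $g\times g$ minor of the Jacobian is a nonzero polynomial and skip $\Sigma$ altogether.
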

By computing the Koszul's cohomology, we have partial descriptions of the Betti diagram of $C'$:
\begin{thm}With the assumptions in Theorem 1.1, we have:\\
	(1)The Castelnuovo's regularity $\mathrm{reg}(C')$ is $3$. Concretely speaking, we have firstly $\widetilde{b}_{i,q}=0$ for $q\geq4$ and secondly there is exactly one non-zero position in the third row of the Betti diagram of $C'$, which is $\widetilde{b}_{g-3,3}=1$.\\ 
	(2)The length of the free resolution of $S(C')$ is exactly $g-2$. Moreover, we have 
	$$\widetilde{b}_{p,2}=\left\{
	\begin{aligned}
		&0,\quad p\geq g-1\\
		&1,\quad p=g-2\\
		&g,\quad p=g-3
	\end{aligned}\right.$$
\end{thm}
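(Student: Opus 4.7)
The plan is to reduce the Koszul cohomology $\widetilde{K}_{i,q}(C,\omega_C;V)$ to better-understood invariants through two successive reductions: first to the $S$-module Koszul cohomology $K_{i,q}^S(C,\omega_C;V)$ of $M$ (the $N_{d,p}^S$-type invariants studied for outer projections in \cite{KP} and \cite{non-complete}), and then to the complete-system Koszul cohomology $K_{i,q}(C,\omega_C)$ of the canonical curve, which for generic $C$ is controlled by Voisin's proof of Green's conjecture.

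For the first reduction, choose any $x\in H^0(C,\omega_C)\setminus V$, so that $H^0(\omega_C)=V\oplus kx$. The inclusion $S(C')\hookrightarrow M$ of graded $S$-modules has cokernel $kx$, concentrated in degree $1$, giving
$$0\longrightarrow S(C')\longrightarrow M\longrightarrow k(-1)\longrightarrow 0.$$
Since $\mathrm{Tor}_n^S(k(-1),k)_m$ is $\bigwedge^n V$ when $m=n+1$ and $0$ otherwise, tracking internal degrees in the resulting $\mathrm{Tor}^S(-,k)$ long exact sequence produces isomorphisms $\widetilde{K}_{i,q}(C,\omega_C;V)\cong K_{i,q}^S(C,\omega_C;V)$ for all $q\geq 3$, and for $q=2$ a four-term exact sequence
$$\bigwedge\nolimits^{i+1}V\longrightarrow\widetilde{K}_{i,2}(C,\omega_C;V)\longrightarrow K_{i,2}^S(C,\omega_C;V)\longrightarrow 0.$$

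For the second reduction, take $x$ generic so that its divisor $Z\subset C$ is a general configuration of $2g-2$ points. Green's hyperplane section theorem then gives a long exact sequence linking $K_{i,q}^S(C,\omega_C;V)$ to $K_{i,q}(C,\omega_C)$ and to Koszul cohomology of $Z$ (zero-dimensional, hence concentrated in the lowest rows). By Voisin's theorem, the Betti table $K_{p,q}(C,\omega_C)$ of a generic canonical curve is supported only in rows $q\in\{0,1,2\}$, with the rightmost class being the one-dimensional $K_{g-2,2}(C,\omega_C)=k$. Combining both reductions immediately forces $\widetilde{b}_{i,q}=0$ for $q\geq 4$ (so $\mathrm{reg}(C')\leq 3$), and isolates the unique nonzero entry of row $3$: the top-corner class $K_{g-2,2}(C,\omega_C)$ propagates through the shift in Green's sequence and Step~1 to $\widetilde{K}_{g-3,3}(C,\omega_C;V)=k$, proving Part~(1).

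For Part~(2), since $\dim V=g-1<g=\dim H^0(\omega_C)$ the curve $C'$ fails to be $1$-normal, hence $\mathrm{depth}_S S(C')=1$ (it is $\geq 1$ because $C'$ is reduced and irreducible). Auslander--Buchsbaum then gives $\mathrm{pd}_S S(C')=(g-1)-1=g-2$, which establishes the length of the resolution and immediately yields $\widetilde{b}_{p,2}=0$ for $p\geq g-1$. The remaining values $\widetilde{b}_{g-2,2}=1$ and $\widetilde{b}_{g-3,2}=g$ are extracted from the four-term sequence of the first reduction at $q=2$, once $K_{g-2,2}^S$ and $K_{g-3,2}^S$ are computed via the second reduction; the appearance of $g=\dim H^0(\omega_C)$ in the latter should trace back to the hyperplane along which we projected. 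The main obstacle will be the quantitative control of the connecting maps and of the mid-range Betti numbers $K_{p,1}(C,\omega_C)$ and $K_{p,2}(C,\omega_C)$ (beyond the extremal entry $K_{g-2,2}=1$), which is what ultimately determines whether $\widetilde{b}_{g-3,2}$ is precisely $g$ rather than some smaller number left after cancellation against the $(g-1)$-dimensional $\bigwedge^{g-2}V$.
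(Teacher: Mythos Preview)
Your two-reduction strategy (first from $S(C')$ to $M$, then from $V$ to the full $H^0(\omega_C)$) is quite different from the paper's route, and while the first reduction via $0\to S(C')\to M\to k(-1)\to 0$ is correct, there is a factual error and a genuine gap.

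\textbf{Factual error.} You assert that the canonical Betti table is supported in rows $q\in\{0,1,2\}$ with top class $K_{g-2,2}(C,\omega_C)=k$. This is wrong: the canonical ring is Gorenstein with resolution ending in $R(-g-1)$, so $K_{g-2,3}(C,\omega_C)=k$ while $K_{g-2,2}(C,\omega_C)=b_{0,1}=0$ by self-duality. The table has four rows, not three. Once corrected, your chain does give $\widetilde K_{g-3,3}\cong K^S_{g-3,3}\cong K_{g-2,3}(C,\omega_C)=k$, so Part~(1) can be salvaged---but this uses only the Gorenstein shape of the canonical resolution, not Voisin. Also, what you call ``Green's hyperplane section theorem'' is not that theorem; what you actually need for the second reduction is the long exact sequence coming from the inclusion of Koszul complexes $\bigwedge^\bullet V\otimes M\hookrightarrow\bigwedge^\bullet W\otimes M$, and the divisor $Z$ plays no role.

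\textbf{Genuine gap in Part (2).} Your Auslander--Buchsbaum argument for $\mathrm{pd}=g-2$ is fine, but for $\widetilde b_{g-3,2}=g$ you yourself flag ``the main obstacle'' and do not resolve it. Concretely, your sequence
\[
K^S_{i+1,1}\longrightarrow\textstyle\bigwedge^{i+1}V\longrightarrow\widetilde K_{i,2}\longrightarrow K^S_{i,2}\longrightarrow 0
\]
at $i=g-3$ requires both $K^S_{g-3,2}$ and the image of the leftmost map, and your second reduction entangles $K^S_{g-3,2}$ with $K_{g-2,2}$, $K^S_{g-3,3}$, and the connecting map (multiplication by $x_0$). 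You have not carried this out.

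\textbf{Comparison.} The paper bypasses both reductions. Since $S(C')_j=H^0(\omega_C^j)$ for $j\geq 2$, the kernel bundle $M_V$ of $V\otimes\mathcal O_C\twoheadrightarrow\omega_C$ gives, after Serre duality, $\widetilde K_{i,j}^\vee\cong H^0(\bigwedge^{g-3-i}M_V\otimes\omega_C^{\,3-j})$ for $j\geq 3$, which is visibly zero for $j\geq 4$ and equals $H^0(\mathcal O_C)$ at $(i,j)=(g-3,3)$. For $j=2$ one gets $\widetilde K_{i,2}^\vee=\ker\big(H^1(\bigwedge^{g-2-i}M_V)\to\bigwedge^{g-2-i}V\otimes V^\vee\big)$, and an explicit diagram chase with the defining sequence of $M_V$ yields $\widetilde b_{g-2,2}=1$ and $\widetilde b_{g-3,2}=g$ by elementary dimension counts---no Voisin, no comparison to the canonical syzygies, and no connecting maps to control.
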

This theorem shows that the Betti diagram of $C'$ looks like the following:\\
\begin{tabular}{c|c c c c c}
	&$0$&$1$&$\cdots$&$g-3$&$g-2$\\
	\hline
	$0$&$1$& & & &\\
	$1$& &$\frac{(g-1)(g-6)}{2}$&$\cdots$& &\\
	$2$& &? &$\cdots$ &$g$&1\\
	$3$& & & &$1$ &
\end{tabular}

We cannot decide whether the question mark is $0$ at this moment, which will be discussed later. The length of the first row is also unknown. However, it definitely does not exceed $\lfloor\frac{g}{2}\rfloor-1$, which is the length of the first row of the Betti diagram of $C$ by \cite{v1} and \cite{v2}. It shows that for a generic canonical curve the Castelnuovo's regularity does not change after being projected into a codimension $1$ subspace while \cite{KP} and \cite{non-complete} predicted that the Castelnuovo's regularity might increase by $1$ for general varieties. For $g=6$, since it is already known that $I_{C'}(2)=0$, the complete Betti diagram of $C'$ can be obtained:

\begin{tabular}{c|c c c c c}
 &0&1&2&3&4\\
\hline
0&1& & & &\\
1& & & & &\\
2& &10 &15 &6&1\\
3& & & &1 &
\end{tabular}

Denote the intersection of all the varieties defined as the vanishing locus of quadrics in $I_{C'}(2)$ by $Y$, i.e. $Y=\bigcap\limits_{Q\in I_{C'}(2)}V(Q)$. Clearly $C'\subset Y$. Then we show that in the case $g\geq8$, for a generic point $q\in C'$, the dimension of $Y$ locally at $q$ is $1$. Geometrically this means that $Y$ contains $C'$ as an irreducible component. This argument also shows that for $g=7$ the projection $C'$ cannot be cut out only by quadrics, although the accurate dimension of $I_{C'}(3)$ is still unknown. The story in the genus $7$ gives a more intuitive evidence on the failure of deforming to the inner projection. The syzygy property of the inner projection, for example Theorem 3.1 of \cite{CHOI2006859}, indicates that the inner projection of a generic canonical curve of genus $7$ is cut out by quadrics, but its generic outer projection fails to be.

 For $g\geq8$, by testing on $\emph{Macaulay2}$\cite{m2} and using the vector bundle method developed by H. Lange \cite{L1} and Lange-Sernesi \cite{L2}, we can show that:
\begin{thm}Keeping the conventions as in previous theorems and assuming that $g\geq8$, $C'$ is cut out by quadrics. In particular, when $g=8$, the natural multiplication
$$I_2(V)\otimes V\to I_3(V)$$
is an isomorphism.
\end{thm}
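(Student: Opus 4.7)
By Theorem 1.2, $\mathrm{reg}(C') = 3$ and the only nonzero entry in the third Betti row is $\widetilde{b}_{g-3,3}=1$. In particular $\widetilde{b}_{1,j}=0$ for $j \ge 3$, so $C'$ is cut out by quadrics if and only if $\widetilde{b}_{1,2}(C') = 0$, equivalently if and only if the multiplication map
\[
\mu \colon V \otimes I_{C'}(2) \longrightarrow I_{C'}(3)
\]
is surjective. For $g = 8$ a straightforward Riemann-Roch computation gives $\dim\bigl(V \otimes I_{C'}(2)\bigr) = 7 \cdot 7 = 49 = \dim I_{C'}(3)$, so surjectivity of $\mu$ automatically upgrades to the isomorphism claimed in Theorem 1.3.

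The natural vehicle is the Lange-Sernesi kernel bundle
\[
0 \longrightarrow M_V \longrightarrow V \otimes \mathcal{O}_C \longrightarrow \omega_C \longrightarrow 0,
\]
a rank-$(g-2)$ vector bundle on $C$, since $V$ is base-point free for generic $p$. As emphasized in the introduction, the subtle point is that $S(C')_1 = V \subsetneq H^0(\omega_C)$, so the Koszul complex computing $\widetilde{K}_{1,2}(C,\omega_C;V)$ does not literally coincide with the one extracted from global sections of the sequence $0 \to \bigwedge^2 M_V \otimes \omega_C \to \bigwedge^2 V \otimes \omega_C \to M_V \otimes \omega_C^2 \to 0$. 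The plan is to compare the two via the exact sequence $0 \to V \to H^0(\omega_C) \to k \to 0$ encoding the projection direction; after unwinding the resulting diagram, the vanishing $\widetilde{K}_{1,2}(C,\omega_C;V) = 0$ reduces to the $3$-normality of $C'$ (equivalently $H^1(M_V \otimes \omega_C^2) = 0$) together with an injectivity statement for the connecting homomorphism $H^1(\bigwedge^2 M_V \otimes \omega_C) \to \bigwedge^2 V \otimes H^1(\omega_C)$.

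The $3$-normality step is the easier one and follows from a Clifford/stability argument applied to $M_V \otimes \omega_C^2$, which has sufficiently large slope on a generic canonical curve. The injectivity of the connecting map is the hard step. For $g = 8$ the plan is to verify it directly on \emph{Macaulay2} using a random canonical curve over a finite field, computing the Betti diagram of a random hyperplane projection and then invoking semi-continuity of Betti numbers to conclude for a generic curve in characteristic zero; the dimension count recorded above then promotes this to the full isomorphism. For $g \geq 9$ the plan is to apply the bundle techniques of Lange and Lange-Sernesi: degenerate the pair $(C,V)$ to a curve on which $M_V$ admits a filtration by line bundles of controlled degree, reduce the required cohomology to an explicit line bundle computation, and propagate the vanishing back to a generic canonical curve by semi-continuity.

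The main obstacle is expected to be this second step. Controlling the cohomology of $\bigwedge^2 M_V$ requires stability information for the kernel bundle of a hyperplane rather than a complete linear system, and the comparison with the Koszul complex entangles this with the extra direction $H^0(\omega_C)/V \cong k$. Moreover, any viable degeneration must be chosen to preserve both the local freeness of $M_V$ and the genericity of $V$; this is the delicate point that makes the threshold $g \ge 8$ sharp, in line with the paper's earlier observation that the generic outer projection of a genus-$7$ canonical curve already fails to be cut out by quadrics.
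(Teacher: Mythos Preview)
Your reduction is correct, and the $g=8$ plan matches the paper: Remark~1.4 and Section~6 verify the vanishing $\widetilde{b}_{1,2}=0$ on a random nodal curve over a finite field and invoke semi-continuity, after which the dimension count $7\cdot 7 = 49 = \dim I_3(V)$ promotes surjectivity to the claimed isomorphism exactly as you say.

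For $g\geq 9$, however, your plan diverges from the paper and is not a proof as written. You propose to control $H^1(\bigwedge^2 M_V\otimes\omega_C)$ by degenerating $(C,V)$ so that $M_V$ acquires a line-bundle filtration, but you do not specify the degeneration, and you yourself flag this as the main obstacle. The paper does something entirely different and never touches $\bigwedge^2 M_V$. It argues by contradiction on the \emph{set-theoretic} intersection $Y=\bigcap_{Q\in I_{C'}(2)}V(Q)$: if there were a point $p\in Y\setminus C'$, evaluation at $p$ would be well-defined on $H^0(2\omega_C)$ and give a class $\eta\in\mathrm{Ext}^1(\omega_C,\mathcal{O}_C)$, hence a rank~$2$ bundle $E$ in an extension $0\to\mathcal{O}_C\to E\to\omega_C\to 0$. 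The explicit normal form of the quadric generators obtained in Theorem~2.1 forces $\dim\ker\delta_\eta\geq g-3$, so $h^0(E)\geq g-2$. Lange's secant construction then produces a second exact sequence $0\to A\to E\to \mathcal{O}(D)\to 0$ with $\deg D=\lfloor\tfrac{3g-1}{2}\rfloor$, and the inequality $h^0(A)+h^0(D)\geq h^0(E)$ yields $\mathrm{Cliff}(D)\leq 3$ with both $A$ and $D$ contributing to the Clifford index once $g\geq 9$. This contradicts $\mathrm{Cliff}(C)=\lfloor\tfrac{g-1}{2}\rfloor\geq 4$. Finally, $Y=C'$ as sets is upgraded to scheme-theoretic equality via Theorem~3.1 (showing $C'$ is an irreducible component of $Y$, hence $Y$ is generically reduced) together with \v{S}okurov's Lemma~3.3.

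So the phrase ``Lange--Sernesi vector bundle method'' in the paper refers to this rank~$2$ extension/secant/Clifford-index machine, not to filtrations of the kernel bundle $M_V$. Your proposed route might be made to work, but it would require substantially more input than you indicate, and it is not what the paper does.
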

\begin{rmk}
	The proposition for the case $g=8$ can be checked by finding an example via \emph{Macaulay2} and the openess of the condition being cut out by quadrics. In fact, consider the flat family ${X}$ of all the curves in $\mathbb{P}^6$ with the Hilbert polynomial $q(t)=14t-7$ over any scheme $S$:\\
	\xymatrix{&\quad\quad\quad\quad{X}\ar@{->}[d]^\xi\subset\mathbb{P}^6\times S\\
	&S
	}\\
Then there is a morphism $\phi:\xi_*\mathcal{I}_X(2)\otimes\xi_*\mathcal{O}_{\mathbb{P}^6\times S}(1)\to\xi_*\mathcal{I}_X(3)$
on $S$, $\mathcal{I}_X$ being the ideal sheaf of $X$ in $\mathbb{P}^6\times S$, whose fiber at $s\in S$ is exactly $\phi_s:I_{X_s}(2)\otimes H^0(\mathcal{O}_{\mathbb{P}^6}(1))\to I_{X_s}(3)$. The surjectivity of this morphism on fibers is an open condition in $S$. In particular, if $S$ is all the projections of canonical curves of genus $8$, as a component of the Hilbert scheme $\mathrm{Hilb}_6^{q(t)}$, by the Castelnuovo's regularity of a generic object of $S$, the surjectivity of $\phi_s$ ensures that $X_s$ is cut out by quadrics.

Consider the locus  
$\mathcal{H}^{ns}_{14,8,6}=\{[C\subset\mathbb{P}^6,L]\in\mathrm{Hilb}_6^{q(t)}|C\ \mathrm{is}\ \mathrm{smooth}\ \mathrm{and}\ H^1(C,L)=0\}$ of smooth and non-special curves. Then the locus $S$ of projected canonical curves can be seen as the boundary of $\mathcal{H}^{ns}_{14,8,6}$ and in particular, they are in the same component of $\mathrm{Hilb}_6^{q(t)}$. As indicated in \cite{Ve}, the forgetting map $[C\subset\mathbb{P}^6,L]\mapsto[C,L]$ to the universal Picard variety $\mathcal{P}ic^{14}_8$ is dominant. On the other hand, it was shown in \cite{genus8} that the failure of being cut out by quadrics is divisorial on $\mathcal{P}ic^{14}_8$, hence it is pulled back to a divisor on $S$ via the composition $S\hookrightarrow\overline{\mathcal{H}^{ns}}\dasharrow\mathcal{P}ic^{14}_8$. We conclude that the failure of being cut out by quadrics for projected canonical curves of genus $8$ is divisorial. Therefore the genus $8$ part of Theorem 1.3 is shown by finding a concrete example.
\end{rmk}
Combining what was done in \cite{KP} and \cite{non-complete} and the results obtained by testing on $\emph{Macaulay2}$, we can compare the Betti diagrams of (generic projections of) generic canonical curves in different senses:

\begin{tabular}{|c|c| c| c|}
\hline
  &$b_{i,j}$ &$b_{i,j}^S$&$\widetilde{b}_{i,j}$\\
\hline
vanishing of the second row&$N_{\mathrm{Cliff}-1}$ &$N_{\mathrm{Cliff}-2}^S$ &$\widetilde{N}_{\mathrm{Cliff}-3}$(expected,$g\geq9$)\\
\hline
length of linear strands&$g-\mathrm{Cliff}-2$ &$g-\mathrm{Cliff}-2$ &$g-\mathrm{Cliff}-3$(expected,$g\geq9$)\\
\hline
\end{tabular}\\

Recall that a paracanonical curve is a curve equipped with a paracanonical bundle\mbox{$L:=\omega_C\otimes\eta$}, where $\eta\in\mathrm{Pic}^0(C)\setminus\{\mathcal{O}_C\}$ is a generic non-trivial torsion line bundle. Say $\eta$ is of level $l$ if $\eta^{\otimes l}\cong\mathcal{O}_C$. We say that $C$ is Prym-canonical if $l=2$. The genericity assumption on $\eta$ ensures that $\eta\not=C_2-C_2$ i.e. $\eta$ is not the difference of two effective divisors of degree $2$ so that the paracanonical line bundle $\omega_C\otimes\eta$ is very ample.  Applying the methods used for canonical curves to paracanonical curves, we will have
\begin{thm}Let $C$ be a generic paracanonical curve of genus $g\geq11$ with $\eta$ a generic non-trivial torsion line bundle and $p\in\mathbb{P}^{g-2}=\mathbb{P}H^0(\omega_C\otimes\eta)^\vee$ a generic point. Then the projection $C'$ of $C$ away from $p$ is cut out by quadrics. 
\end{thm}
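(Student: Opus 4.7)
The plan is to mirror the strategy of Theorem 1.3 in the paracanonical setting. Since $L=\omega_C\otimes\eta$ with $\eta$ a generic nontrivial torsion line bundle satisfies $\deg L=2g-2$ and $h^1(L)=h^0(\eta^{-1})=0$, Riemann--Roch gives $h^0(L)=g-1$, so $C\subset\mathbb{P}^{g-2}$ and a generic outer projection lands in $\mathbb{P}^{g-3}=\mathbb{P}V$. The first step is to establish the paracanonical analogs of Theorems 1.1 and 1.2: compute $\dim I_{C'}(2)$ via the Koszul exact sequence relating $I_C(2)$ to $I_{C'}(2)$ under the projection, and verify $2$-normality $\mathrm{Sym}^2V\twoheadrightarrow H^0((\omega_C\otimes\eta)^2)$. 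The required input is the surjectivity of $\mathrm{Sym}^2H^0(L)\to H^0(L^2)$ for generic paracanonical curves (which holds for $g\geq 11$ by Green--Lazarsfeld-type arguments and can be checked by tracking the cohomology of the kernel bundle $M_L$ twisted by $L$), together with a codimension count for the generic hyperplane $V$.

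The second step is to analyze $Y=\bigcap_{Q\in I_{C'}(2)}V(Q)$ and show $C'$ is an irreducible component of $Y$ by proving that, for a generic point $q\in C'$, the local dimension of $Y$ at $q$ is $1$. Concretely, one studies the rank of the differential of the multiplication map at $q$, which translates into a cohomological statement about the kernel bundle $M_V:=\ker(V\otimes\mathcal{O}_C\to L)$ on $C$, paralleling the argument sketched for the canonical case. Since the numerical invariants of $M_V$ depend only on $h^0(L)$ and $\deg L$, the computation is formally the same as for the canonical setting but with $\omega_C$ replaced by $\omega_C\otimes\eta$; the delicate point is controlling $H^0$ and $H^1$ of bundles of the form $M_V\otimes L\otimes\eta^{\pm 1}$, where the behavior genuinely depends on whether $\eta^2\cong\mathcal{O}_C$ (the Prym-canonical case) or not.

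The third step is to exclude additional components of $Y$. Using the vector bundle method of H.~Lange and Lange--Sernesi, any extra component would force the existence of a family of multisecant lines or planes to $C'$, which pulls back via the projection to a constrained linear series on $C$ twisted by $\eta$. For generic $(C,\eta)$ with $g\geq 11$, such special configurations are ruled out by Brill--Noether-type estimates on the universal paracanonical locus. Finally, as in Remark 1.4, one exploits the openness in the Hilbert scheme of the property of being cut out by quadrics: verifying a base case for the smallest admissible genus $g=11$ on \emph{Macaulay2} and combining with semicontinuity on the moduli of pairs $(C,\eta)$ propagates the statement to all $g\geq 11$.

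The hardest step will be the third one: the kernel-bundle analysis that worked so cleanly in the canonical case requires more bookkeeping in the presence of the twist $\eta$, because intermediate cohomology groups carry extra terms indexed by the level of $\eta$, and the bound $g\geq 11$ is precisely what is needed to make the resulting vanishing estimates uniform across all possible levels of $\eta$. Establishing an appropriate generic vanishing for $H^1(C,\bigwedge^k M_V\otimes L\otimes\eta)$ in this range is the technical core of the argument, and is where the difference between Theorems 1.3 and 1.5 becomes visible.
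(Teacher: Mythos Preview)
Your proposal has two genuine gaps.

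First, the final step is wrong: you cannot ``propagate the statement to all $g\geq 11$'' by verifying $g=11$ on \emph{Macaulay2} and invoking semicontinuity. Semicontinuity operates within a flat family over a fixed parameter space; curves of different genera do not sit in a common flat family (their Hilbert polynomials differ), so a computation at $g=11$ says nothing about $g=12,13,\ldots$. The paper's architecture is the reverse of what you describe: the Lange--Sernesi extension-bundle argument handles all $g\geq 13$ uniformly via a Clifford-index contradiction, and the two leftover cases $g=11,12$ are then checked individually by \emph{Macaulay2} (each one separately, using a nodal degeneration over a finite field and upper semicontinuity within that genus).

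Second, your account of the vector-bundle step is off-target. The mechanism is not ``multisecant lines or planes'' nor ``Brill--Noether-type estimates on the universal paracanonical locus,'' and the technical core is not a vanishing of $H^1(\bigwedge^k M_V\otimes L\otimes\eta)$. What actually happens is this: a hypothetical point $p\in Y\setminus C'$ gives a well-defined evaluation functional $\theta=ev_p\in H^0(L^{\otimes 2})^\vee\cong\mathrm{Ext}^1(\omega_C\otimes\eta,\eta^\vee)$, hence a rank-$2$ extension $0\to\eta^\vee\to E\to\omega_C\otimes\eta\to 0$. Analysis of the quadric generators shows $\dim\ker(\delta_\theta)\geq g-4$, so $h^0(E)\geq g-4$. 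Viewing $[\theta]$ inside the bicanonical space and choosing an effective divisor $D$ of degree $d\approx\tfrac{3g}{2}$ with $[\theta]\in\langle D\rangle$ produces a second exact sequence for $E$, and comparing the two forces $\mathrm{Cliff}(\eta^\vee(D))\leq 5$ with both $\eta^\vee(D)$ and its Serre adjoint contributing to the Clifford index once $g\geq 13$. Since a generic curve has $\mathrm{Cliff}(C)=\lfloor\tfrac{g-1}{2}\rfloor\geq 6$ for $g\geq 13$, this is the contradiction. The twist by $\eta$ plays almost no role beyond bookkeeping; in particular the level of $\eta$ is irrelevant, and the threshold $g\geq 11$ is not ``precisely what is needed'' for any uniform vanishing---it is simply the point below which $C'$ fails to be cut out by quadrics (as one sees from the $g=9,10$ tables), with $11$ and $12$ falling in the gap between the numerical lower bound and the reach of the Clifford argument.
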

In Section 2, the normality of $C'$ and some basic information on the Betti diagrams of $C'$ will be discussed, which proves Theorem 1.1 and 1.2. In Section 3, I will explain the failure of $C'$ being cut out by quadrics for $g=7$ and give the evidence that $C'$ could be cut out by quadrics, especially for $g=8$, which cannot be explained by the Lange-Sernesi vector bundle method however. The Section 4 contributes to the proof of the $g\geq9$ cases of Theorem 1.3 using Lange-Sernesi vector bundle method and the Section 5 deals with paracanonical curves and the proof of Theorem 1.5 will be given. In Section 6, I will exhibit the testing result on $\emph{Macaulay2}$, based on which some conjectures on the $\widetilde{N}_m$ property of $C'$ and the length of the linear strands of the resolutions will be claimed.
\begin{ack}
	I thank my supervisor Prof. Gavril Farkas for introducing me this nice topic, in which there are many interesting relations between the geometric aspects and the syzygy resolutions to be observed and investigated, and in particular, for his reminding me of the divisoriality of the failure of being cut out by quadrics. I'm grateful to Jieao Song for his assistance on the programming and discussions related to the Hilbert schemes and to Andr\'{e}s Rojas for his patient guidance to understand Lange's vector bundle methods. I thank Daniele Agostini for his discussions with me on the very ampleness of the paracanonical bundles. I thank the referee for the improvement of the contents of the paper, especially for the advice to explain why the deformation to the inner projection fails.
\end{ack}

\section{\label{S2}Basics on quadrics containing projected canonical curves}
We start from calculating the dimension of $I_{C'}(2)$ and show the $2$-normality of $C'$ from the dimension counting.
\begin{thm}Let $C\subset\mathbb{P}^{g-1}$ be a generic canonical curve of genus $g\geq6$. For a generic \mbox{$p\in\mathbb{P}^{g-1}$}, let $C'$ be the projection of $C$ away from $p$ to a hyperplane $H=\mathbb{P}V$. Then $$\dim I_{C'}(2)=\frac{(g-1)(g-6)}{2}$$
 and $C'$ is $2$-normal.
\end{thm}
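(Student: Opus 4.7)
I plan to reduce both assertions to a single claim, the $2$-normality of $C'$. From the exact sequence
$$
0 \to I_{C'}(2) \to \mathrm{Sym}^2 V \xrightarrow{\mu_V} H^0(\omega_C^2)
$$
one reads $\dim I_{C'}(2) = \binom{g}{2} - \mathrm{rank}(\mu_V) \geq \binom{g}{2} - (3g-3) = \frac{(g-1)(g-6)}{2}$, with equality if and only if $\mu_V$ is surjective, which is precisely $2$-normality of $C'$. So it is enough to prove $2$-normality for generic $V$.

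Noether's theorem will be the starting point: since a generic canonical curve is non-hyperelliptic, the full multiplication $\mu\colon\mathrm{Sym}^2 H^0(\omega_C) \twoheadrightarrow H^0(\omega_C^2)$ is surjective, hence its dual $\mu^\vee\colon H^0(\omega_C^2)^\vee \hookrightarrow \mathrm{Sym}^2 H^0(\omega_C)^\vee$ is injective. By duality, surjectivity of $\mu_V$ is equivalent to the injectivity of the composite
$$
H^0(\omega_C^2)^\vee \xrightarrow{\mu^\vee} \mathrm{Sym}^2 H^0(\omega_C)^\vee \twoheadrightarrow \mathrm{Sym}^2 V^\vee,
$$
whose right-hand kernel is identified with the $g$-dimensional subspace $p\cdot H^0(\omega_C)^\vee$, where $p\in H^0(\omega_C)^\vee$ generates $V^\perp$. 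Thus I need to establish
$$
\mu^\vee\bigl(H^0(\omega_C^2)^\vee\bigr)\cap p\cdot H^0(\omega_C)^\vee = 0
$$
for generic $p$, which is numerically permissible exactly for $g\ge 6$, as $(3g-3)+g\le\binom{g+1}{2}$ in that range.

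For the transversality, the geometric translation helps: a quadric $Q\in\mathrm{Sym}^2 H^0(\omega_C)$ lies in $\mathrm{Sym}^2 V$ iff its gradient at $p$ vanishes, i.e.\ $p\in\mathrm{Sing}(Q)$. Hence
$$
I_{C'}(2)=\{Q\in I_C(2)\,:\,p\in\mathrm{Sing}(Q)\},
$$
and an incidence-variety argument in $\mathbb{P}^{g-1}\times\mathbb{P}(I_C(2))$ should yield the desired upper bound: the rank-$r$ stratum of $I_C(2)$ contributes a $(g-1-r)$-dimensional fibre of singular points, and for a Petri-general canonical curve the rank-$\le r$ stratum of $I_C(2)$ is expected to have codimension $\binom{g-r+1}{2}$, which makes the dimensions balance to exactly $\frac{(g-1)(g-6)}{2}$.

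The main obstacle will be justifying these expected codimensions for the rank strata of $I_C(2)$, the only substantive input beyond Noether. A convenient shortcut is semicontinuity: since surjectivity of $\mu_V$ is an open condition on the pair $(C,V)$, it suffices to exhibit one pair where $2$-normality holds (for instance via a \emph{Macaulay2} verification on a specific canonical curve, or by a degeneration to a reducible limit where the computation is transparent), and the conclusion then propagates to the generic pair.
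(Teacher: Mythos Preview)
Your reductions are correct: the two assertions are equivalent, and your dual formulation $\mu^\vee(H^0(\omega_C^2)^\vee)\cap p\cdot H^0(\omega_C)^\vee=0$ as well as the geometric translation $I_{C'}(2)=\{Q\in I_C(2):p\in\mathrm{Sing}(Q)\}$ are both valid. However, the proposal does not actually finish. The rank--stratification argument is only stated as an expectation, and for canonical curves the strata of $I_C(2)$ are \emph{not} generic (low-rank quadrics coming from pencils abound), so justifying ``expected codimension $\binom{g-r+1}{2}$'' is a genuine problem, not a formality. The semicontinuity shortcut is legitimate in principle, but you would need one verified example \emph{for every} $g\ge 6$, or a degeneration that works uniformly; as written, nothing is exhibited.

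The paper's route is both different and shorter, and in fact you were one step away from it. Your observation that $I_{C'}(2)=\{Q\in I_C(2):p\in\mathrm{Sing}(Q)\}$ is exactly the statement that $I_{C'}(2)$ is the kernel of the linear map $I_C(2)\to k^g$, $Q\mapsto\nabla Q(p)$, i.e.\ of the Jacobian of a basis of $I_C(2)$ evaluated at $p$. The paper proves the theorem by showing directly that this Jacobian has full rank $g$ at a generic point $p$ (this is where $g\ge 6$ enters, to ensure $\dim I_C(2)\ge g$), whence $\dim I_{C'}(2)=\dim I_C(2)-g=\tfrac{(g-1)(g-6)}{2}$; the $2$-normality then drops out of the dimension count you already wrote down. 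So the paper reverses your logical order (it proves the dimension formula first and deduces $2$-normality), and it replaces your incidence/rank-strata analysis by the single claim ``$\mathrm{rank}\,Jac(F)(p)=g$ generically,'' which is argued elementarily and attributed to Alzati--Russo. Your dual/transversality condition is in fact equivalent to this full-rank statement, so rather than passing through rank strata of $I_C(2)$, the efficient move is to argue directly that the degeneracy locus of the Jacobian is a proper closed subset of $\mathbb{P}^{g-1}$.
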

\begin{proof}
  This is essentially a special case of Theorem 2.7 in \cite{AR}. By the genericity of $C$, we know from for example, \cite{Sch} and \cite{ACGH} that $C$ is $l$-normal for all $l\geq2$ and in particular, $\dim I_{C}(2)=\frac{(g-2)(g-3)}{2}$. Without loss of generality, we may assume that $p=[1:0:\cdots:0]$ and $V=\{x_0=0\}$. Then the projection $\pi_p:\mathbb{P}^{g-1}\dashrightarrow H$ is simply dropping the $0$-th coordinate $[x_0:x_1:\cdots:x_{g-1}]\mapsto[x_1:\cdots:x_{g-1}]$. Let $R=k[x_0,\cdots,x_{g-1}]$ and $S=k[x_1,\cdots,x_{g-1}]$ be the coordinate rings of $\mathbb{P}^{g-1}$ and $\mathbb{P}V$ respectively. Hence there is a natural embedding $S\hookrightarrow R$ and $I_{C'}(2)=I_{C}(2)\cap S$. Assume that $I_C(2)$ has a basis $\{F_1,\cdots,F_{\frac{(g-2)(g-3)}{2}}\}$ where $F_i$'s are quadric polynomials in $R$ and we have the matrix $F=\left(\begin{matrix}
F_1 \\
\vdots \\
F_{\frac{(g-2)(g-3)}{2}}
\end{matrix}\right)$. If it holds that the Jacobian matrix $Jac(F)$ is of full rank at $p$, then after applying elementary transformations, we may assume that $Jac(F)(p)=\left(\begin{matrix}
I_g \\
0
\end{matrix}\right)$. Hence $F=
\left(\begin{matrix}
\frac{1}{2}x_0^2+G_1\\
x_0x_1+G_2\\
\vdots\\
x_0x_{g-1}+G_g\\
G_{g+1}\\
\vdots\\
G_{\frac{(g-2)(g-3)}{2}}
\end{matrix}\right)$, where $G_i$'s are quadrics in $S$. Then $\{G_{g+1},\cdots,G_{\frac{(g-2)(g-3)}{2}}\}$ form a basis of $I_{C'}(2)$ and the dimension is $\dim I_C(2)-g=\frac{(g-1)(g-6)}{2}$. The condition required is that $Jac(F)$ is of full rank at $p$. We need to show that such a choice of $p$ is generic. In fact, the complement of the locus where $Jac(F)$ is of full rank is a closed subset of $\mathbb{P}^{g-1}$ which is the union of the vanishing locus of each $(g-1)$-minor of $Jac(F)$. For any set of $g-1$ rows of $Jac(F)$ fixed, these rows are linearly independent because the original homogeneous polynomials are linearly independent. Therefore the vanishing locus of this $(g-1)$-minor is of codimension $1$. Since there are finitely many $(g-1)$-minors, the union of these loci is still of codimension $1$. Summarizing the discussion above, we can choose $p$ in the open subset $\{p\in\mathbb{P}^{g-1}|\mathrm{rank}Jac(F)(p)=g\}$ and let $p=[1:0:\cdots:0]$ after applying a coordinate transformation. Note that the assumption $g\geq6$ ensures that there are enough linearly independent quadrics in $I_C(2)$, meaning that $\dim I_C(2)=\frac{(g-2)(g-3)}{2}\geq g$.

We turn to show the $2$-normality. Note that there is an exact sequence of sheaves on $\mathbb{P}^{g-2}=\mathbb{P}V$:
$$0\to\mathcal{I}_{C'}(2)\to\mathcal{O}_{\mathbb{P}^{g-2}}(2)\to\mathcal{O}_{C'}(2)=\omega_C^2\to0.$$
Taking the global sections, we have an exact sequence of vector spaces:
$$0\to I_{C'}(2)\to\mathrm{Sym}^2V\to H^0(\omega_C^2).$$
Then we have
\begin{eqnarray*}
\dim \mathrm{Im}(\mathrm{Sym}^2V\to H^0(\omega_C^2))&=&\dim\mathrm{Sym}^2V-\dim I_{C'}(2)\\
&=&\binom{2+(g-1)-1}{g-1-1}-\frac{(g-1)(g-6)}{2}\\
&=&3g-3.
\end{eqnarray*}
This coincides with the dimension of $H^0(\omega_C^2)$. Hence the morphism $\mathrm{Sym}^2V\to H^0(\omega_C^2)$ is surjective, meaning that $C'$ is $2$-normal.
\end{proof}
We can obtain some information about the Betti diagram of $C'\hookrightarrow\mathbb{P}^{g-2}$ by computing the Koszul cohomology.
\begin{thm}With the assumptions and conclusions in the last Theorem, we can know the following facts about the Betti numbers $\widetilde{b}_{i,j}$:\\
(1)For $j\geq4$, $\widetilde{b}_{i,j}=0$.\\
(2)We have $\widetilde{b}_{g-3,3}=1$, while $\widetilde{b}_{i,3}=0$ for $i\not=g-3$.\\
(3)For $i\geq g-1$, $\widetilde{b}_{i,2}=0$, while we have $\widetilde{b}_{g-2,2}=1$ and $\widetilde{b}_{g-3,2}=g$.
\end{thm}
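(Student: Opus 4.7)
The plan is to compute $\widetilde{K}_{i,j}(C,\omega_C;V)$ directly. Since $\pi\colon C\to C'$ is an isomorphism, $\mathcal{O}_{C'}(j)\cong\omega_C^{\otimes j}$, and combining Theorem 2.1 with the surjectivity of $H^0(\omega_C^{j})\otimes V\to H^0(\omega_C^{j+1})$ for $j\ge 2$ (a standard multiplication-surjectivity argument for generic $V$ on a generic canonical curve) promotes $2$-normality to $\ell$-normality for every $\ell\ge 2$. Hence $S(C')_j=H^0(\omega_C^{\otimes j})$ for $j\ne 1$, while $S(C')_1=V$ is a codimension-one subspace of $H^0(\omega_C)$. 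Writing $M=\bigoplus_{j\ge 0}H^0(\omega_C^{\otimes j})$ for the Kwak--Park module, there is a short exact sequence of graded $S$-modules
\[0\to S(C')\to M\to k(-1)\to 0.\]
Applying $\mathrm{Tor}^S_{\bullet}(-,k)$ and noting that $\mathrm{Tor}^S_i(k(-1),k)_{i+j}=\bigwedge^i V$ when $j=1$ and $0$ otherwise, one obtains $\widetilde{K}_{i,j}=K^S_{i,j}$ for $j\ne 1,2$; at $j=2$ a four-term sequence
\[K^S_{i+1,1}\to\bigwedge\nolimits^{i+1}V\to\widetilde{K}_{i,2}\to K^S_{i,2}\to 0,\]
and at $j=1$ an analogous sequence linking $\widetilde{K}_{i,1}$, $K^S_{i,1}$ and $\bigwedge^{i}V$.

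\textbf{Parts (1) and (2).} The group $K^S_{i,j}$ is precisely the Koszul cohomology $K_{i,j}(C,\omega_C;V)$, and the Koszul hyperplane restriction theorem (HRT, coming from $0\to V\to H^0(\omega_C)\to k\to 0$) controls it in terms of the Betti diagram of the canonical curve itself. For generic $C$ of genus $g\ge 6$ that diagram is classical: $K_{*,j}(C,\omega_C)=0$ for $j\ge 4$, the only non-zero entry in row $j=3$ is $K_{g-2,3}(C,\omega_C)=k$, and Koszul duality $K_{i,1}\cong K_{g-2-i,2}^{\vee}$ links the two middle rows. For $j\ge 4$ both HRT neighbours vanish, forcing $K_{i,j}(C,\omega_C;V)=0$, which gives part (1). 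For row $j=3$, tracking the generator of $K_{g-2,3}(C,\omega_C)$ through the HRT connecting homomorphism yields $K_{g-3,3}(C,\omega_C;V)=k$ and zero elsewhere in the row, giving part (2).

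\textbf{Part (3) and the main obstacle.} The vanishing $\widetilde{b}_{i,2}=0$ for $i\ge g-1$ reduces to $\bigwedge^{\ge g}V=0$ plus a direct injectivity check on the rightmost Koszul differential at $i=g-1$. The extremal entries $\widetilde{b}_{g-2,2}=1$ and $\widetilde{b}_{g-3,2}=g$ live exactly where the four-term sequence at $j=2$ is genuinely non-trivial; I would pin them down by combining (a) the HRT values of $K^S_{i,1}$ and $K^S_{i,2}$ near the right edge of the diagram (using in particular the canonical-curve Koszul duality and $K_{g-2,1}(C,\omega_C)=k$), (b) the dimensions of $\bigwedge^{i+1}V$ at those positions, and (c) an Euler-characteristic identity in row $2$ coming from the Hilbert polynomial of $S(C')$, which is already pinned down by Theorem 2.1 and the $\ell$-normality step. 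The main obstacle, I expect, is this last point: separating $\widetilde{b}_{g-2,2}$ from $\widetilde{b}_{g-3,2}$, rather than merely determining their alternating sum, requires enough control over the connecting map $K^S_{i+1,1}\to\bigwedge^{i+1}V$ at the right edge of row $2$. A secondary technical point, used throughout, is the promotion of $2$-normality to $\ell$-normality for $\ell\ge 3$ that underlies the identification $S(C')_j=H^0(\omega_C^j)$ in the relevant range.
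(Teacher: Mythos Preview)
Your approach is valid and takes a genuinely different route from the paper. The paper works directly with the kernel bundle $M_V$ defined by $0\to M_V\to V\otimes\mathcal{O}_C\to\omega_C\to 0$: it expresses $\widetilde{K}_{i,j}$ for $j\ge 2$ as a cokernel involving $H^0(\bigwedge^i M_V\otimes\omega_C^j)$, Serre-dualizes to reduce everything to explicit kernels, and handles $\widetilde{b}_{g-3,2}=g$ by a hands-on diagram chase with the evaluation map $V\otimes V^\vee\to k$. Your route---comparing $S(C')$ with the section module $M$ via $0\to S(C')\to M\to k(-1)\to 0$ and then controlling $K^S_{i,j}=K_{i,j}(M;V)$ through the hyperplane long exact sequence linking $K_{\bullet,\bullet}(M;V)$ to $K_{\bullet,\bullet}(C,\omega_C)$---is more structural and bypasses the bundle computations entirely. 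What it buys is that every step reduces to reading off known entries of the canonical Betti table; what the paper's approach buys is self-containment (no LES bookkeeping, no appeal to the canonical table beyond its shape).

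You are too pessimistic about part (3): the ``main obstacle'' dissolves. At the relevant positions the leading term $K^S_{i+1,1}$ of your four-term sequence already vanishes, so no Euler-characteristic trick is needed. Indeed, the hyperplane long exact sequence (whose connecting map is multiplication by $x_0$) together with $K_{p,1}(C,\omega_C)=K_{p,2}(C,\omega_C)=0$ for $p\ge g-2$ gives first $K^S_{g-1,2}=K^S_{g-2,2}=0$, then $K^S_{g-1,1}=K^S_{g-2,1}=0$, and finally $x_0\colon K^S_{g-3,2}\xrightarrow{\ \sim\ }K^S_{g-3,3}=k$ (the last equality from part (2)), since both neighbours $K_{g-2,2}(C,\omega_C)$ and $K_{g-3,3}(C,\omega_C)$ vanish. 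Your four-term sequence then collapses to $0\to\bigwedge^{g-1}V\to\widetilde{K}_{g-2,2}\to 0$ and $0\to\bigwedge^{g-2}V\to\widetilde{K}_{g-3,2}\to k\to 0$, yielding $1$ and $(g-1)+1=g$ exactly. The secondary point, $\ell$-normality for $\ell\ge 3$, is also harmless: it follows from $2$-normality of $C'$ plus projective normality of $C$, since $V\cdot H^0(\omega_C^{\ell-1})=\big(V\cdot H^0(\omega_C)\big)\cdot H^0(\omega_C^{\ell-2})=H^0(\omega_C^2)\cdot H^0(\omega_C^{\ell-2})=H^0(\omega_C^{\ell})$.
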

\begin{proof}First we consider the cases $j\geq3$. Since $C'\hookrightarrow\mathbb{P}V$ is a closed embedding, we know that the linear system corresponding to $V\subset H^0(\omega_C)$ is generated by global sections. Hence there is an exact sequence of sheaves on $C$:
$$0\to M_V\to V\otimes\mathcal{O}_C\to\omega_C\to0.$$
Then we know from Remark 2.7 of \cite{AN} that for $j\geq3$
$$\widetilde{K}_{i,j}(C,L;V)=\mathrm{coker}\Big(\bigwedge\limits^{i+1}V\otimes H^0(\omega_C^{j-1})\to H^0(\bigwedge\limits^iM_V\otimes\omega_C^j)\Big).$$
Taking the duality and using Serre's duality, we have
\begin{eqnarray*}
\widetilde{K}_{i,j}(C,L;V)^\vee&=&\ker\Big(H^1(\bigwedge\limits^iM_V^\vee\otimes\omega_C^{1-j})\to\bigwedge\limits^{i+1}V^\vee\otimes H^1(\omega_C^{2-j})\Big)\\
&=&\ker\Big(H^1(\bigwedge\limits^{g-2-i}M_V\otimes\omega_C^{2-j})\to\bigwedge\limits^{g-2-i}V\otimes H^1(\omega_C^{2-j})\Big)
\end{eqnarray*}
Since there is an exact sequence
$$0\to\bigwedge\limits^{g-2-i}M_V\otimes\omega_C^{2-j}\to\bigwedge\limits^{g-2-i}V\otimes\omega_C^{2-j}\to\bigwedge\limits^{g-3-i}M_V\otimes\omega_C^{3-j}\to0$$
and the induced long exact sequence on cohomology, we have
\begin{eqnarray*}
\widetilde{K}_{i,j}(C,L;V)^\vee&=&\mathrm{coker}\Big(\bigwedge\limits^{g-2-i}V\otimes H^0(\omega_C^{2-j})\to H^0(\bigwedge\limits^{g-3-i}M_V\otimes\omega_C^{3-j})\Big)\\
&=&\mathrm{coker}\Big(\bigwedge\limits^{g-2-i}V\otimes H^0(\omega_C^{2-j})\to H^0(\bigwedge\limits^{g-3-i}M_V\otimes\omega_C^{3-j})\Big)\\
&=&H^0(\bigwedge\limits^{g-3-i}M_V\otimes\omega_C^{3-j})\\
&=&\ker\Big(\bigwedge\limits^{g-3-i}V\otimes H^0(\omega_C^{3-j})\to\bigwedge\limits^{g-2-i}V\otimes H^0(\omega_C^{4-j})\Big).
\end{eqnarray*}
Clearly for $j\geq4$, $H^0(\omega_C^{4-j})=0$. Assertion (1) is proved.

Now assume $j=3$. If $i=g-3$, then $\widetilde{K}_{i,j}(C,L;V)^\vee\cong H^0(\mathcal{O}_C)$. Hence $\widetilde{b}_{g-3,3}=1$. Obviously $\widetilde{b}_{i,3}=0$ if $i>g-3$. For $i<g-3$, $\widetilde{K}_{i,3}(C,L;V)^\vee\cong\ker\big(\bigwedge\limits^{g-3-i}V\to\bigwedge\limits^{g-2-i}V\otimes H^0(\omega_C)\big)=0$, since the Koszul differential $\bigwedge\limits^{g-3-i}V\to\bigwedge\limits^{g-2-i}V\otimes V$ is injective as the starting morphism of the Koszul resolution of the field $k$.

Now assume $j=2$. In this case the Koszul cohomology groups become:
\begin{eqnarray*}
\widetilde{K}_{i,2}(C,L;V)&=&\mathrm{coker}\Big(\bigwedge\limits^{i+1}V\otimes V\to H^0(\bigwedge\limits^iM_V\otimes\omega_C^2)\Big)\\
\widetilde{K}_{i,2}(C,L;V)^\vee&=&\ker\Big(H^1(\bigwedge\limits^iM_V^\vee\otimes\omega_C^{-1})\to\bigwedge\limits^{i+1}V^\vee\otimes V^\vee\Big)\\
&=&\ker\Big(H^1(\bigwedge\limits^{g-2-i}M_V)\to\bigwedge\limits^{g-2-i}V\otimes V^\vee\Big).
\end{eqnarray*}
Obviously $\widetilde{b}_{i,2}=0$ for $i\geq g-1$. For $i=g-2$, it is the isomorphism $$\widetilde{K}_{g-2,2}(C,L;V)^\vee\cong\ker\big(H^1(\mathcal{O}_C)\to V^\vee\big).$$ 
Hence $\widetilde{b}_{g-2,2}=1$. For $i=g-3$, we have 
$$\widetilde{K}_{g-3,2}(C,L;V)^\vee\cong\ker\big(H^1(M_V)\to V\otimes V^\vee\big).$$
Note that there is a commutative diagram with the exact horizontal row:
\\
\xymatrix{
0\ar[r] &V\ar[r] &H^0(\omega_C)\ar[r] &H^1(M_V)\ar[r] &V\otimes H^0(\omega_C)^\vee\ar[r]^{\quad\quad\quad ev}\ar[d]^\phi &k\ar[r] &0\\
& & & &V\otimes V^\vee\ar@{-->}[ru]_{ev'}
}
\\
where $\phi$ is the restriction map $v\otimes f\mapsto v\otimes f|_V$. Clearly the composition $H^1(M_V)\to V\otimes V^\vee\to k$ is $0$. Now assume $\alpha\in\ker(V\otimes V^\vee\to k)$. We will show that $\alpha$ comes from $H^1(M_V)$. In fact, if we write $\alpha=\sum\limits_{i=1}^Nv_i\otimes f_i$ with $v_i\in V$, $f_i\in V^\vee$, then $\alpha=\phi(\sum\limits_{i=1}^Nv_i\otimes \widetilde{f}_i)$, where $\widetilde{f}_i$ is the extension of $f_i$ to $H^0(\omega_C)$, by defining $\widetilde{f}_i(x_0)=0$. Denote $\sum\limits_{i=1}^Nv_i\otimes \widetilde{f}_i=:\beta$. Then $ev(\beta)=0$. By the exactness of the horizontal row, $\beta$ comes from $H^1(M_V)$. We conclude that $H^1(M_V)\to V\otimes V^\vee\to k\to 0$ is exact. Therefore $\mathrm{Im}\big(H^1(M_V)\to V\otimes V^\vee\big)=\ker(V\otimes V^\vee\to k)$, whose dimension is $(g-1)^2-1$. By the dimension counting of the exact horizontal row, $\dim H^1(M_V)=(g-1)g$. Therefore $\dim\ker\big(H^1(M_V)\to V\otimes V^\vee\big)=g(g-1)-[(g-1)^2-1]=g$. This completes the proof of (3).
\end{proof}
Since we already know that for $g=6$, $\widetilde{b}_{1,1}=0$, it is possible to give the complete Betti diagram of $C'$.
\begin{cor}Let $C$ be a generic canonical curve of genus $6$ and $C'$ be as before. Then the Betti diagram of $C'$ is:\\
\begin{tabular}{c|c c c c c}
 &0&1&2&3&4\\
\hline
0&1& & & &\\
1& & & & &\\
2& &10 &15 &6&1\\
3& & & &1 &
\end{tabular}
\end{cor}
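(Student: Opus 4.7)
The plan is to combine Theorems~2.1 and~2.2 specialised to $g=6$ with two further observations that pin down all remaining entries of the Betti diagram. From those theorems one already has, for $g=6$: $\widetilde{b}_{1,1}=\dim I_{C'}(2)=0$; $\widetilde{b}_{3,2}=6$, $\widetilde{b}_{4,2}=1$ and $\widetilde{b}_{i,2}=0$ for $i\geq 5$; the unique nonzero entry in row~$3$ is $\widetilde{b}_{3,3}=1$; and $\widetilde{b}_{i,j}=0$ whenever $j\geq 4$. What is still missing is (a)~the vanishing $\widetilde{b}_{i,1}=0$ for every $i\geq 2$, and (b)~the values $\widetilde{b}_{1,2}=10$ and $\widetilde{b}_{2,2}=15$.

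For~(a), I would exploit the absence of quadrics in $I_{C'}$. Since $(I_{C'})_2=0$, every minimal generator of $I_{C'}$ has degree $\geq 3$. A standard consequence of minimality of a graded free resolution is that the $p$-th free module in the minimal resolution of $I_{C'}$ is then generated in degrees $\geq p+3$, whence $\beta_{p,p+2}(I_{C'})=0$ for every $p\geq 0$. The long exact Tor sequence associated with $0\to I_{C'}\to S\to S(C')\to 0$ provides the identification $\widetilde{b}_{i,1}(S(C'))=\beta_{i-1,i+1}(I_{C'})$, which then vanishes for every $i\geq 1$. This settles row~$1$.

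For~(b), I would carry out a Hilbert-series computation. By $2$-normality (Theorem~2.1) one has $h_{C'}(0)=1$, $h_{C'}(1)=5$, $h_{C'}(2)=15$; by $\mathrm{reg}(C')=3$ (Theorem~2.2) the Hilbert function agrees with the Hilbert polynomial $10t-5$ for $t\geq 3$. The identity $H(x)(1-x)^5=K(x)$ with $K(x)=1+\sum_{i,j\geq 1}(-1)^i\widetilde{b}_{i,j}x^{i+j}$, after inserting the Betti numbers supplied by~(a) and by Theorems~2.1 and~2.2, lets one read off $\widetilde{b}_{1,2}$ and $\widetilde{b}_{2,2}$ directly from the coefficients of $x^3$ and $x^4$. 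The only non-mechanical step is~(a), but even there the input is just the minimality of the resolution together with $I_{C'}(2)=0$ from Theorem~2.1, so no real obstacle is expected.
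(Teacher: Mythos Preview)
Your proof is correct and organises the final computation differently from the paper. Both arguments rest on Theorems~2.1 and~2.2 together with the vanishing of the linear strand, which you make explicit in part~(a). For the two remaining entries the paper works more directly: it first asserts that $2$-normality propagates to $l$-normality for all $l\geq2$, reads off $\widetilde{b}_{1,2}=\dim I_{C'}(3)=\dim S^3V-h^0(\omega_C^3)=35-25=10$, and then chases dimensions through the single Koszul strand
\[
0\to\textstyle\bigwedge^4V\to\bigwedge^3V\otimes V\to\bigwedge^2V\otimes H^0(\omega_C^2)\to V\otimes H^0(\omega_C^3)\to H^0(\omega_C^4)\to0,
\]
using $\widetilde{b}_{3,1}=\widetilde{b}_{1,3}=0$ to conclude $\widetilde{b}_{2,2}=15$. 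Your Hilbert-series identity $K(x)=H(x)(1-x)^5$ packages exactly the same alternating sums but extracts both unknowns simultaneously; it is tidier bookkeeping and makes the dependence on part~(a) transparent (the paper invokes $\widetilde{b}_{3,1}=0$ without comment). One small point worth stating: the implication ``$\mathrm{reg}(C')=3\Rightarrow h_{C'}(t)=10t-5$ for $t\geq3$'' uses that $S(C')$ has depth at least~$1$, which holds here because $S(C')$ is a domain.
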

\begin{proof}
It is already known that $C'$ is $2$-normal and hence $l$-normal for all $l\geq2$. Hence we have $\widetilde{b}_{1,2}=\dim S^3V-\dim H^0(\omega_C^3)=\binom{3+5-1}{5-2}-(5\times6-5)=10$.\\
Now consider the differentials:
$$\xymatrix{
0\ar[r]&\bigwedge\limits^4V\ar[r]^{\phi_1}&\bigwedge\limits^3V\otimes V\ar[r]^{\phi_2}&\bigwedge\limits^2V\otimes H^0(\omega_C^2)\ar[r]^{\phi_3} &V\otimes H^0(\omega_C^3)\ar[r]^{\phi_4}  &H^0(\omega_C^4)\ar[r]&0.
}$$
By the $4$-normality, $\phi_4$ is surjective. So $\dim\ker(\phi_3)=5\cdot(5g-5)-(7g-7)=90$. By the vanishing of $\widetilde{b}_{1,3}$, we know that $\dim\mathrm{Im}(\phi_3)=\dim\ker(\phi_4)=90$. Therefore\\
 $\begin{array}{lcl}
 \dim\ker(\phi_3)&=&\dim\bigwedge\limits^2V\otimes H^0(\omega_C^2)-\dim\mathrm{Im}(\phi_3)\\
 &=&\binom{5}{2}\cdot(3g-3)-90\\
 &=&60.
 \end{array}$\\
 On the other hand, by the injectivity of $\phi_1$ and the vanishing of $\widetilde{b}_{3,1}$, we know that $$\dim\ker(\phi_2)=\dim\mathrm{Im}(\phi_1)=\binom{5}{4}=5.$$ 
 Therefore 
 $$\dim\mathrm{Im}(\phi_2)=\bigwedge\limits^3V\otimes V-\dim\ker(\phi_2)=\binom{5}{3}\cdot5-5=45.$$ 
 Since $\widetilde{b}_{3,1}=0$, $\phi_1$ is injective. Then $\widetilde{b}_{2,2}=\dim\ker(\phi_3)-\dim\mathrm{Im}(\phi_2)=60-45=15$. We reach the required conclusion by combining with Theorem 2.2.
\end{proof}
\section{\label{S3}Analysis on quadric generators}
We are going to investigate on the quadric generators of the ideal of the projected canonical curve. The goal is to give some evidence on that for a generic canonical curve $C\subset\mathbb{P}^{g-1}$ of genus $g\geq8$, its projection away from a general point $p\in\mathbb{P}^{g-1}$ is generated by quadrics.

Let $C\subset\mathbb{P}^{g-1}$ be a generic canonical curve with $g\geq8$. Consider a generic hyperplane $H\cong\mathbb{P}^{g-2}\subset\mathbb{P}^{g-1}$. We know that $H$ intersects with $C$ in $2g-2$ in general position by Chapter III $\S$1 of \cite{ACGH}. Take a general point $p_0\in\mathbb{P}^{g-1}\setminus C$ as the point which the curve is projected away from. In particular, such a point is chosen to be outside $H$. Without loss of generality, denote $p_0=[1:0:\cdots:0]$ and $H=\{x_0=0\}$ as before and take $g-1$ points on $H$, denoted by $p_1=[0:1:0:\cdots:0],\cdots,p_{g-1}=[0:\cdots:0:1]$. Take the sections $x_0,x_1,\cdots,x_{g-1}\in\mathbb{P}^{g-1}$ such that $x_i(p_j)=\delta_{ij}$ for $0\leq i,j\leq g-1$.

Take $g=8$ as an example. It has been seen that the quadric generators of $C$ are spanned by $8$ quadrics containing $x_0$, saying $F_i=x_0x_i+P_i(x_1,\cdots,x_7)$, for $0\leq i\leq 7$, and $7$ quadrics in $x_1,\cdots,x_7$, namely $F_j=P_j(x_1,\cdots,x_7)$ for $7\leq j\leq 14$. If the projected curve $C'$ were cut out by quadrics, after dropping the quadrics with $x_0$, the Jacobian matrix of the remaining terms would have rank $6-1=5$ at the points $p_1,\cdots,p_7$ by the smoothness of $C'$. On the other hand, the total rank of the coefficients of $x_7$ of all the terms, including those with $x_0$, should be $7-1=6$ by the smoothness of $C$. This means that the summands with $x_7$ appearing in $F_0,\cdots,F_6$ can be eliminated by those from $F_8,\cdots,F_{14}$ after applying some elementary row transformations, while $F_7$, since it contains $x_0x_7$, would survive from the eliminations from $F_8,\cdots,F_{14}$. Therefore, by applying some elementary row transformations, we may without loss of generality assume that the quadric generators of $C$ are spanned by quadrics of the following types:
$$\left\{
\begin{array}{lr}
	F_i=x_0x_i+P_i(x_1,\cdots,x_6),&0\leq i\leq6\\
	F_7=l_0(x_0,x_1,\cdots,x_6)x_7+P_7(x_1,\cdots,x_6)&\\
	F_j=l_j(x_1,\cdots,x_6)x_7+Q_j(x_1,\cdots,x_6),&8\leq j\leq 12\\
	F_{13}=R_1(x_1,\cdots,x_6),F_{14}=R_2(x_1,\cdots,x_6)&
\end{array}	
\right.
$$
where $P_*$'s, $Q_*$'s and $R_*$'s are quadrics in $x_1,\cdots,x_6$, $l_0$ is a linear form in $x_0,x_1,\cdots,x_6$ with the coefficient of $x_0$ non-zero and $l_i$'s are linear forms in $x_1,\cdots,x_6$ for $1\leq i\leq 5$. Due to the vanishing orders at $p_7$, one may further find that all the $l_i$'s for $0\leq i\leq 5$ are sections in $H^0(C,\mathcal{O}(K-2p_7))$. Moreover, by the geometric Riemann-Roch Theorem, the dimension of $H^0(C,\mathcal{O}(K-2p_7))$ is exactly $6$, which in other words means that $l_0,\cdots,l_5$ are exactly the basis of $H^0(C,\mathcal{O}(K-2p_7))$. Such a story above would be verified by checking that after deleting the basis functions with $x_7$, there are exactly $7$ remaining linearly independent terms with $x_0$, namely $F_0,\cdots,F_6$. This means that the Jacobian matrix of the quadrics in the ideal of the inner projection $C''$ away from $p_7$ to the hyperplane $\{x_7=0\}$, which passes through $p_0$, has full rank at $p_0$. This motivates the following theorem:
\begin{thm}Let $C\subset\mathbb{P}^{g-1}$ be a canonical curve of genus $g\geq8$. Let $p\in\mathbb{P}^{g-1}\setminus C$ be a general point and $H$ a hyperplane $H$ not passing through $p$. Denote $C'$ to be the projection $C$ away from $p$ to $H$. Let $S$ be the set of all quadrics on $H$ containing $C'$ and denote the intersection $\bigcap\limits_{Q\in S}Q=:Y$. Then $Y$ contains $C'$ as an irreducible component.
\end{thm}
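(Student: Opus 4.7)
The plan is to show that for a general $q \in C'$ the local dimension $\dim_q Y$ equals $1 = \dim C'$; since $C' \subset Y$ and $C'$ is irreducible, this forces $C'$ to be an irreducible component of $Y$. By the Jacobian criterion, it suffices to check that the Jacobian matrix of a generating set of $I_{C'}(2)$ has rank $g-3$ at $q$. Since $\pi_p$ restricted to $C$ is an isomorphism, I pick a general $p' \in C$, set $q = \pi_p(p')$, and choose coordinates so that $p = [1{:}0{:}\cdots{:}0]$, $H = \{x_0 = 0\}$, and $p' = p_{g-1} = [0{:}\cdots{:}0{:}1]$.

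The first and most delicate step is to produce, via elementary row operations on a basis of $I_C(2)$, the structured basis foreshadowed in the $g=8$ discussion preceding the theorem: $g-1$ quadrics of shape $x_0 x_i + P_i(x_1, \ldots, x_{g-2})$ for $0 \leq i \leq g-2$; one quadric $l_0(x_0, \ldots, x_{g-2}) x_{g-1} + P_0$ with $l_0$ involving $x_0$ nontrivially; a further $g-3$ quadrics $l_j(x_1, \ldots, x_{g-2}) x_{g-1} + Q_j$ for $j = 1, \ldots, g-3$; and the remaining $\tfrac{1}{2}(g^2-9g+12)$ quadrics lying in $k[x_1, \ldots, x_{g-2}]$. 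For a generic canonical $C$ of genus $g \geq 8$, Petri's theorem guarantees $I_C$ is generated by quadrics, and the smoothness of $C$ at $p_{g-1}$ forces the Jacobian of $I_C(2)$ at $p_{g-1}$ to have rank exactly $g-2$, so precisely $g-2$ of the generators must contribute nonzero rows there, namely the $l_j$-quadrics for $0 \leq j \leq g-3$.

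The next step is to identify each $l_j$ as a section of $\omega_C(-2p_{g-1})$: from $F_j|_C = 0$ one obtains $l_j \cdot x_{g-1} = -Q_j$ on $C$, and since $x_{g-1}$ is a local unit at $p_{g-1}$ while $Q_j$ is a quadratic expression in sections vanishing at $p_{g-1}$, $l_j$ must vanish at $p_{g-1}$ to order $\geq 2$. Geometric Riemann--Roch gives $h^0(\omega_C(-2p_{g-1})) = g-2$, and provided $p$ does not lie on the embedded tangent line to $C$ at $p_{g-1}$ --- a generic condition on $p$ --- the subspace $V \cap H^0(\omega_C(-2p_{g-1}))$ has dimension exactly $g-3$. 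Therefore $l_1, \ldots, l_{g-3}$ span this space and are linearly independent as linear forms in $x_1, \ldots, x_{g-2}$.

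Finally, one evaluates the Jacobian of the generators of $I_{C'}(2)$ at $q = p_{g-1}$. The quadrics $R_k(x_1, \ldots, x_{g-2})$ contribute zero rows. For each $F_j = l_j x_{g-1} + Q_j$, the entry in the column $\partial/\partial x_i$ (for $i \leq g-2$) is the coefficient of $x_i$ in $l_j$, while the $\partial/\partial x_{g-1}$-column vanishes because $l_j(p_{g-1}) = 0$. The resulting Jacobian is the coefficient matrix of $l_1, \ldots, l_{g-3}$, which has rank $g - 3$. Thus $\dim T_q Y \leq (g-2) - (g-3) = 1$, and combined with $C' \subset Y$ this yields $\dim_q Y = 1$, establishing the result. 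The main obstacle is the structured decomposition in Step 1: turning a generic basis of $I_C(2)$ into the prescribed shape requires simultaneously controlling vanishing orders at $p_{g-1}$, the rank-$(g-2)$ Jacobian condition (which guarantees the $l_0$-quadric absorbing the $x_0 x_{g-1}$ contribution), and the $2$-normality of $C$; once this decomposition is in hand, the remaining linear-algebra and Riemann--Roch arguments are routine.
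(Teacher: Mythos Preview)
Your overall strategy matches the paper's: show that the Jacobian of $I_{C'}(2)$ has rank $g-3$ at some point of $C'$, then invoke semicontinuity. The problem is that your justification of the structured basis in Step 1 is circular.

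Start, as you do, from the basis produced by Theorem~2.1: $F_i = x_0x_i + \widetilde{P}_i(x_1,\dots,x_{g-1})$ for $0\le i\le g-1$ and $F_j\in k[x_1,\dots,x_{g-1}]$ for $j\ge g$. To reach your prescribed shape you must eliminate all occurrences of $x_{g-1}$ from $F_0,\dots,F_{g-2}$ using only $F_g,\dots,F_N$ (you cannot use $F_{g-1}$, since that would reintroduce an $x_0$-term). Looking at gradients at $p_{g-1}$: the $x_0$-column of the full Jacobian is $e_{g-1}$, so the rows $i\ne g-1$ span a $(g-3)$-dimensional space. Your elimination is possible precisely when the rows $j\ge g$ already span this same $(g-3)$-dimensional space, i.e.\ when the Jacobian of $I_{C'}(2)$ at $p_{g-1}$ has rank $g-3$. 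But that is exactly the statement you are trying to prove. Smoothness of $C$ at $p_{g-1}$ only tells you the \emph{full} Jacobian has rank $g-2$; it does not tell you how that rank is distributed between the $x_0$-containing quadrics and those in $I_{C'}(2)$. Your Riemann--Roch argument in Step 2 confirms that the $l_j$ (equivalently the $m_j$ for all $G_j\in I_{C'}(2)$) lie in the $(g-3)$-dimensional space $V\cap H^0(\omega_C(-2p_{g-1}))$, but this yields only the upper bound $\mathrm{rank}\le g-3$, which is automatic from $C'\subset Y$ anyway.

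The paper closes this gap by turning the problem around. Achieving your structured basis is equivalent to the Jacobian of $I_2(\omega_C(-p_{g-1}))$ (the quadrics through the \emph{inner} projection of $C$ from $p_{g-1}$, living in $k[x_0,\dots,x_{g-2}]$) having full rank $g-1$ at the outer projection center $p_0$. Applying the argument of Theorem~2.1 to $\omega_C(-q)$ in place of $\omega_C$ (this is where the bound $g\ge 8$ and the very-ampleness of $\omega_C(-q)$ enter), the bad locus in $\mathbb{P}H^0(\omega_C(-q))^\vee$ where this rank drops has dimension at most $g-3$. Letting $q$ vary over $C$ and counting dimensions on the incidence correspondence $\{(y,q):q\in H(y)\cap C\}$, the image of the total bad locus in $\mathbb{P}^{g-1}$ has dimension at most $g-2$. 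Hence for a \emph{general} $p_0$ there exists some $q$ (which one then takes as $p_{g-1}$) for which the structured basis holds. This genericity argument for the pair $(p_0,q)$ is the missing ingredient; without it you cannot assert that your chosen general $p'\in C$ actually works for the already-fixed general $p$.
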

\begin{proof}
For $q\in C$, one can find that $p_0\in\mathbb{P}H^0(C,\omega_C(-q))^\vee$. Since for a generic curve of genus at least $8$, its gonality is at least $5$. Therefore by Proposition 3.1 (b) of \cite{Har}, the line bundle $\omega_C(-q)$ is very ample. It implies that the image $C''$ of the inner projection defined by the linear system $\Big(\omega_C(-q),H^0\big(\omega_C(-q)\big)\Big)$ is an embedding of $C$ into $H'=\mathbb{P}H^0(C,\omega_C(-q))^\vee$. Replacing $\omega_C$ by $\omega_C(-q)$ in Theorem 2.1, for which step the condition $\dim I_2(C)-(g-2)\geq g-1$ is required, implying that $g\geq8$, we find that for a general point $y\in H'$, the rank of the Jacobian matrix of a basis of $I_2(\omega_C(-q))$ at $y$ is $g-1$, where $I_2(\omega_C(-q))$ is the kernel of the morphism
$$S^2H^0(C,\omega_C(-q))\to H^0(C,\omega_C(-q)^{\otimes2}).$$
For a generic point $y=[y_0:\cdots:y_{g-1}]\in\mathbb{P}^{g-1}$, let the hyperplane $H(y)\subset\mathbb{P}^{g-1}$ be its dual hyperplane defined by $\sum\limits_{i=0}^{g-1}y_ix_i=0$. Consider all the pairs $P=\{(y,q)\in\mathbb{P}^{g-1}\times C|q\in H(y)\cap C\}$ and its subset $Q=\Big\{(y,q)\in P\Big|\mathrm{rank}Jac\Big(I_2\big(\omega_C(-q)\big)\Big)(y)\leq g-2\Big\}\subset P$, where $Jac\Big(I_2\big(\omega_C(-q)\big)\Big)$ means the Jacobian matrix of a basis of $I_2(\omega_C(-q))$. Let $\pi_1:P\to\mathbb{P}^{g-1}$ and $\pi_2:P\to C$ be the projections to the first and second components respectively. Since the rank of the Jacobian matrix $Jac\Big(I_2\big(\omega_C(-q)\big)\Big)$ is of rank $g-1$ generically on $\mathbb{P}H^0(\omega_C(-q))^\vee$, it is deduced that for a point $q\in C$, the locus $\{y\in\mathbb{P}H^0(\omega_C(-q))^\vee|(y,q)\in Q\}$ has dimension at most $g-3$. Therefore the total dimension of $\pi_1(Q)$ has dimension at most $g-2$ as $q$ traveling through $C$. In other words, for a general $p_0\in\mathbb{P}^{g-1}$, in the sense that $p_0$ is in the complement of $\pi_1(Q)$, there exists a preimage in $(p_0,q)\in P$ such that $\mathrm{rank}Jac\Big(I_2\big(\omega_C(-q)\big)\Big)(p_0)=g-1$. 

Applying a coordinate transformation, we can write $p_0=[1:0:\cdots:0]$. Assume that $H=\{x_0=0\}$ is the dual hyperplane of $p_0$ and it intersects with $C$ at $2g-2$ points $p_1,\cdots,p_{2g-2}$ in general position. Assume that $p_i=[0:\cdots:0:1:0:\cdots:0]$ with $1$ appearing in $i$-th position while other coordinates being $0$, for $1\leq i\leq g-1$. We may let $q=p_{g-1}=[0:\cdots:0:1]$. Then the sections $x_0,\cdots,x_{g-1}\in H^0(C,\omega_C)$ satisfy $x_i(p_j)=\delta_{ij}$ for $0\leq i,j\leq g-1$. Summarizing what were discussed above, we know that the quadric generators of $I(C)$ are in the following forms: 
\newpage
\begin{eqnarray*}
  F_0&=&x_0^2+P_0(x_1,\cdots,x_{g-2}) \\
  F_1 &=& x_0x_1+P_1(x_1,\cdots,x_{g-2}) \\
   &\vdots&  \\
  F_{g-2}&=&x_0x_{g-2}+P_{g-2}(x_1,\cdots,x_{g-2})\\
  F_{g-1}&=& l_0(x_0,x_1,\cdots,x_{g-2})x_{g-1}+Q_0(x_1,\cdots,x_{g-2}) \\
  F_g&=& l_1(x_1,\cdots,x_{g-2})x_{g-1}+Q_1(x_1,\cdots,x_{g-2})\\
  &\vdots&\\
  F_{2g-4}&=&l_{g-3}(x_1,\cdots,x_{g-2})x_{g-1}+Q_{g-3}(x_1,\cdots,x_{g-2})\\
  F_{2g-3}&=&R_1(x_1,\cdots,x_{g-2})\\
  &\vdots&\\
  F_{\frac{(g-1)(g-4)}{2}}&=&R_{\frac{g^2-9g+12}{2}}(x_1,\cdots,x_{g-2}),
\end{eqnarray*}
where $P_*$'s, $Q_*$'s and $R_*$'s are quadrics in $x_1,\cdots,x_{g-2}$, $l_0$ is a linear form in $x_0,x_1,\cdots,x_{g-2}$ with the coefficient of $x_0$ non-zero and $l_i$'s are linearly independent linear forms in $x_1,\cdots,x_{g-2}$ for $1\leq i\leq l_{g-3}$. In particular, at $p_{g-1}\in C'$, the rank of the Jacobian matrix of quadric generators is $g-3$. By the lower semi-continuity of the rank of a matrix, this rank function is at least $g-3$ at a generic point $q'$ on $C'$. On the other hand, if we write $Y=\bigcap\limits_{Q\in I_2(C')}V(Q)$, then $Y\supset C'$ and for any point $q'\in C'$ we have $\mathrm{rank}Jac(I_2(C'))(q')\leq g-2-\dim A(Y)_{q'}\leq g-3$, where $A(Y)$ is the coordinate ring of $Y$ and $A(Y)_{q'}$ is its localization at $q'$. Therefore $\mathrm{rank}Jac(I_2(C'))(q')$ is exactly $g-3$ at a generic point $q'\in C'$ and in particular, $\dim A(Y)_{q'}=1$. Then by \cite{stack} Lemma 33.20.3, we know that the irreducible component containing $C'$ has to be of dimension $1$. Therefore $C'$ is an irreducible component.
\end{proof}
It is not hard to find that for $g=7$, there are not enough basis elements in $I_2(C')$ to satisfy the rank condition if the did define a curve. Therefore we have the corollary:
\begin{cor}Let $C\hookrightarrow\mathbb{P}^6$ be a generic canonical curve of genus $7$, then its projection $C'$ away from a generic point $p\in\mathbb{P}^6$ is not cut out by quadrics.
\end{cor}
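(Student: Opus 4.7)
The plan is a direct dimension count that extracts the substance of the informal ``rank condition'' remark preceding the corollary, using only Theorem 2.1. For $g=7$, Theorem 2.1 gives
$$\dim I_{C'}(2) \;=\; \frac{(g-1)(g-6)}{2} \;=\; 3,$$
and the projected curve $C'$ is an irreducible smooth curve of dimension $1$ embedded in $\mathbb{P}^{g-2}=\mathbb{P}^{5}$. So I would fix a basis $Q_1,Q_2,Q_3$ of $I_{C'}(2)$ and consider the scheme $Y=V(Q_1,Q_2,Q_3)\subset\mathbb{P}^{5}$, which by construction contains $C'$. If $C'$ were cut out by quadrics, then one would need $C'=Y$ set-theoretically, and in particular $\dim Y=1$.

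The main step is then to rule this out, which can be done in either of two equivalent ways. The short argument is the projective dimension theorem: the intersection of three hypersurfaces in $\mathbb{P}^{5}$ has every component of dimension at least $5-3=2$, so $\dim Y\geq 2>\dim C'$, forcing $C'\subsetneq Y$. Equivalently, following the Jacobian approach of Theorem 3.1, at any point $q'\in C'$ the Jacobian matrix of $Q_1,Q_2,Q_3$ has at most $3$ rows, hence rank at most $3$, whereas $C'$ being a one-dimensional component of $Y$ at $q'$ would require the Jacobian to have rank $\mathrm{codim}_{\mathbb{P}^{5}}C'=g-3=4$. This numerical obstruction is the ``rank condition'' referred to in the paragraph just before the corollary.

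There is essentially no obstacle here beyond plugging $g=7$ into the dimension formula and observing $3<4$; the content of the corollary is entirely the failure of the dimension inequality $\dim I_{C'}(2)\geq g-3$ that made the proof of Theorem 3.1 possible for $g\geq 8$. It is worth remarking that this argument, being set-theoretic, gives no information about $\dim I_{C'}(3)$, which is why the paper notes that the precise third-graded piece of the ideal remains unknown for $g=7$; only the qualitative statement that higher-degree generators are necessary is obtained.
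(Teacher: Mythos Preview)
Your argument is correct and is exactly the paper's: the sentence preceding the corollary says precisely that for $g=7$ there are too few quadrics in $I_{C'}(2)$ to meet the rank condition $g-3=4$ at a point of $C'$, and you have unpacked this, with the projective dimension theorem giving an equivalent and slightly cleaner phrasing of the same obstruction $3<4$.
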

Assume that $g\geq8$. The Theorem 3.1 tells us that if we could show that $Y=C'$ on the level of sets, then $Y$ would be generically smooth, which means that there would be only finitely many fat points on $Y$. Note that the linear system of the hyperplane sections on $C'$ is a $(g-1)$-dimensional projective space. Therefore there would exist a hyperplane section $H\cap Y$ of $Y$ consisting of $2g-2$ isolated points and in particular, $H\cap Y=H\cap C'$. Then we would get the reduced structure of $Y$ by \cite{Sokurov1971} Lemma 1.4. I would like to exhibit the proof here with some details added.
\begin{lem}[\v{S}okurov, 1971]
Let $T$ be a non-degenerate projective variety in $\mathbb{P}^N$, generated by forms of degree not less than $n$. Assume that $S$ is the schematically intersection of the forms in $I(T)_n$ and $S=T$ as topological spaces(i.e. $S_{\mathrm{red}}=T$). Then $S=T$ if and only if there is a hyperplane $H$ such that $H\cap T$ is reduced and $H\cap T=H\cap S$.
\end{lem}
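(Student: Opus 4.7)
The plan is to prove the two implications separately. Write $\ell\in R_1$ for the defining linear form of $H$ and $I_T, I_S$ for the homogeneous ideals of $T$ and $S$; note that the hypotheses give $I_S\subset I_T$ (since $S$ is defined by the elements of $I_T$ in degree $n$) and $\sqrt{I_S}=I_T$ (since $S_{\mathrm{red}}=T$).

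For the ``only if'' direction, assume $S=T$ as schemes. Since $T$ is an irreducible reduced projective variety, Bertini supplies a hyperplane $H$ with $H\cap T$ reduced, and then $H\cap S=H\cap T$ is automatic.

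For the ``if'' direction, the plan is to prove $I_T=I_S$ by induction on degree. The key input, to be extracted from the scheme-theoretic identity $H\cap T=H\cap S$ together with the reducedness of $H\cap T$, is the containment
\[
(I_T)_d \subset I_S + \ell\cdot R_{d-1} \quad \text{for every } d \geq n.
\]
Granted this, pick $f\in (I_T)_d$ with $d\geq n$ and write $f=s+\ell g$ with $s\in I_S$ and $\deg g = d-1$. Then $\ell g = f-s \in I_T$; as $T$ is non-degenerate and irreducible, $I_T$ is prime and $\ell\notin I_T$, so $g\in I_T$. Iterating the splitting step as long as the degrees remain $\geq n$ yields
\[
f = s_0 + \ell s_1 + \ell^2 s_2 + \cdots + \ell^{d-n} s_{d-n} + \ell^{d-n+1} h,
\]
with each $s_i \in I_S$ and $h\in (I_T)_{n-1}$. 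The generation hypothesis on $I_T$ forces $(I_T)_{n-1}=0$, so $h=0$ and $f\in I_S$. This gives $I_T\subset I_S$; combined with $I_S\subset I_T$, we conclude $I_T=I_S$, i.e., $S=T$ as schemes.

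The main obstacle is the saturation subtlety needed to establish the key containment displayed above at \emph{every} degree $d\geq n$. Scheme-theoretic equality $H\cap T=H\cap S$ only forces $(I_T+(\ell))_d=(I_S+(\ell))_d$ in sufficiently high degrees a priori, whereas the induction requires the weaker containment $(I_T)_d\subset I_S+\ell\cdot R_{d-1}$ all the way down to degree $n$. Resolving this is precisely where the reducedness of $H\cap T$ enters: reducedness ensures that $I_T+(\ell)$ agrees with the saturated ideal of $H\cap T$ in the relevant range, and together with the corresponding statement for $I_S+(\ell)$ and the generation-in-degree-$n$ hypothesis this pins down the decomposition at every $d\geq n$. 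Once this technical point is handled, the inductive argument and the final use of the generation hypothesis in degree $n-1$ are routine.
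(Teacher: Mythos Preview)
Your argument is essentially the same as the paper's: both peel off a factor of $\ell$ via $f=s+\ell g$ with $s\in I_S$, use primality of $I_T$ and non-degeneracy to get $g\in I_T$, and induct on degree until the generation hypothesis kills the remainder. The only difference is that you explicitly flag the saturation issue behind the key containment $(I_T)_d\subset I_S+\ell R_{d-1}$, whereas the paper simply asserts $I(T\cap H)=I(S)+(\ell)$ without comment; neither you nor the paper fully spells out why reducedness of $H\cap T$ forces this equality in every degree $\geq n$, so your proof is at least as complete as the one presented.
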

\begin{proof}
The implication in one direction is obvious. So assume that there is a hyperplane $H$ such that $T\cap H=S\cap H$ reduced. Let $F$ be a polynomial on $\mathbb{P}^N$ such that $F(T)=0$. We will show that $F$ is generated by polynomials of degree $n$. Clearly we have $\deg(F)\geq n$. We may assume that $\deg(F)>n$. Consider the restriction $F_H$ of $F$ on $H$. Then we have
$$F_H\in I(T\cap H)=I(T)+(l)=I(S)+(l),$$ 
where $l$ is the defining linear form of $H$, since $T\cap H=S\cap H$ is reduced. Hence we can write $F_H=f+g$, where $f\in I(S)$ and $g\in(l)$. Replacing $F$ by $F'=F-f$, we obtain a polynomial $F'$ vanishing on $T$ and in the form of $F'=l\cdot G$ and hence $G$ vanishes on $T$ and is of lower degree. The lemma is proved by induction. 
\end{proof} 
As a summary, to show that $C'$ is cut out by quadrics, it suffices to show that $Y$ is the intersection of quadrics on the level of sets. 

If $g=8$, from the commutative diagram below and the exactness of the middle row\\
\xymatrix{
0\ar[r] &\bigwedge\limits^3V\ar[r] &\bigwedge\limits^2V\otimes V\ar[r] &V\otimes H^0(2\omega_C)\ar[r] &H^0(3\omega_C)\ar[r]&0\\
0\ar[r] &\bigwedge\limits^3V\ar[r] &\bigwedge\limits^2V\otimes V\ar@{=}[u]\ar[r] &V\otimes S^2V\ar[u]\ar[r] &S^3V\ar[u]\ar[r]&0\\
        &                            &0\ar[u]\ar[r] &V\otimes I_2(V)\ar[u]\ar[r] &I_3(V)\ar[u]\ar[r] &0\\
}\\
we know that $\widetilde{b}_{1,2}=\widetilde{b}_{2,1}=\dim\mathrm{coker}(V\otimes I_2(V)\to I_3(V))$. By Theorem 2.1 we know that $\dim I_2(V)=7$ and by the $3$-normality of $C'$ we have
\begin{eqnarray*}
\dim I_3(V)&=&\dim S^3V-\dim H^0(3\omega_C)\\
&=&49\\
&=&\dim V\otimes I_2(V).
\end{eqnarray*}
In conclusion, if $C'$ were cut out by quadrics, then $\widetilde{b}_{1,2}=\widetilde{b}_{2,1}=0$ and the multiplication map $V\otimes I_2(V)\to I_3(V)$ would be an isomorphism. 
We summarize them to the following conclusion:
\begin{thm}
  Let $C\hookrightarrow\mathbb{P}^{g-1}$ be a generic canonical curve of genus $g\geq8$, then its projection $C'$ away from a generic point $p\in\mathbb{P}^{g-1}$ is cut out by quadrics. In particular, if $g=8$, we have $\widetilde{b}_{1,2}=\widetilde{b}_{2,1}=0$ and the multiplication map $V\otimes I_2(V)\to I_3(V)$ is an isomorphism.
\end{thm}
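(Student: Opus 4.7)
The plan is to combine Theorem 3.1 with Lemma 3.3 in order to reduce the claim to the purely set-theoretic statement $Y_{\mathrm{red}} = C'$, where $Y := \bigcap_{Q \in I_2(C')} V(Q)$, and then to handle the cases $g = 8$ and $g \geq 9$ by separate methods. The ``in particular'' assertion for $g = 8$ will then drop out of a diagram chase applied to the commutative diagram displayed just before the theorem.

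First I would observe that by Theorem 3.1 the projected curve $C'$ is already an irreducible component of $Y$, and the rank computation in the proof of Theorem 3.1 shows that $Y$ is generically reduced along $C'$. Granting for the moment that $Y_{\mathrm{red}} = C'$ set-theoretically, I would pick a generic hyperplane $H \subset \mathbb{P}V$. Then $H \cap C'$ consists of $2g-2$ reduced points in general position by Theorem 2.1 together with the general-position theorem for canonical hyperplane sections; and $H \cap Y$ is supported on $H \cap C'$ with the same scheme structure, since a generic $H$ avoids the finitely many non-reduced loci of $Y$. Lemma 3.3 would then upgrade this to $Y = C'$ as schemes, which is exactly the statement that $C'$ is cut out by quadrics.

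For the set-theoretic step in the case $g \geq 9$, I would appeal to the Lange--Sernesi kernel-bundle machinery developed in Section 4. The philosophy there is to express $I_2(C')$ in terms of cohomology of the kernel bundle $M_V$ on $C$, and then to exploit the (semi)stability of $M_V$ together with appropriate vanishing theorems to rule out any additional irreducible component of $Y$ beyond $C'$. For $g = 8$, I would instead produce by explicit \emph{Macaulay2} computation a single smooth projected canonical curve in $\mathbb{P}^6$ whose homogeneous ideal is generated by quadrics. The property ``cut out by quadrics'' is open on the relevant component of $\mathrm{Hilb}_6^{14t-7}$, and Remark 1.4 identifies its failure locus as the pull-back of a divisor on $\mathcal{P}ic^{14}_8$ from \cite{genus8}, so one such explicit smooth example is enough for the generic statement.

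Finally, for the ``in particular'' part at $g = 8$, I would chase the commutative diagram displayed immediately before the theorem. Its snake identifies $\widetilde{b}_{1,2}$ with $\dim \mathrm{coker}(V \otimes I_2(V) \to I_3(V))$ and $\widetilde{b}_{2,1}$ with the corresponding kernel. Theorem 2.1 gives $\dim V \otimes I_2(V) = 7 \cdot 7 = 49$, while $3$-normality (implicit in the regularity statement of Theorem 2.2) yields $\dim I_3(V) = \binom{9}{3} - h^0(3\omega_C) = 84 - 35 = 49$. The first assertion of the theorem forces the cokernel to vanish, so the map is a surjection between vector spaces of the same dimension, hence an isomorphism, and both $\widetilde{b}_{1,2}$ and $\widetilde{b}_{2,1}$ vanish. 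The main obstacle is plainly the set-theoretic equality $Y_{\mathrm{red}} = C'$ for $g \geq 9$: Theorem 3.1 only delivers $C'$ as one irreducible component of $Y$, and excluding further components is the real technical content of the proof, which is precisely the role the Lange--Sernesi vector bundle method is designed to play.
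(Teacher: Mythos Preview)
Your overall architecture---reduce via Theorem~3.1 and Lemma~3.3 to the set-theoretic equality $Y_{\mathrm{red}}=C'$, handle $g\ge 9$ by Section~4 and $g=8$ by an explicit \emph{Macaulay2} example plus openness, then read off the $g=8$ Betti vanishings from the diagram chase---matches the paper's proof exactly, and your numerics for the ``in particular'' clause are correct.

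However, your description of what the Lange--Sernesi method in Section~4 actually does is wrong, and this is the heart of the argument. The method has nothing to do with the kernel bundle $M_V$ or its (semi)stability; those appear in Section~2 for computing Koszul groups, not here. What Section~4 does instead is: assume there exists $p\in Y\setminus C'$, observe that evaluation at $p$ gives a well-defined functional $\eta=ev_p\in H^0(2\omega_C)^\vee\cong \mathrm{Ext}^1(\omega_C,\mathcal{O}_C)$, and build the associated rank-$2$ extension $0\to\mathcal{O}_C\to E\to\omega_C\to 0$. A direct analysis of the quadric generators (in the normal form produced in Section~3) shows $\dim\ker\delta_\eta\ge g-3$, hence $h^0(E)\ge g-2$. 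Then one locates $[\eta]$ on a secant plane $\langle D\rangle$ in $\mathbb{P}H^0(2\omega_C)^\vee$ with $\deg D\approx\tfrac{3g}{2}$, which yields a second decomposition $0\to A\to E\to D\to 0$; comparing $h^0$'s forces $\mathrm{Cliff}(D)\le 3$ while both $D$ and $A=\omega_C(-D)$ contribute to the Clifford index, contradicting $\mathrm{Cliff}(C)=\lfloor\tfrac{g-1}{2}\rfloor\ge 4$ for generic $C$ of genus $g\ge 9$. Your stated plan (``express $I_2(C')$ via cohomology of $M_V$ and use stability'') would not produce this contradiction and does not exclude extra components of $Y$; you would need to replace it with the extension/secant/Clifford argument above.

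One minor point: the paper asserts $\widetilde b_{1,2}=\widetilde b_{2,1}=\dim\mathrm{coker}$, not that one is the kernel and the other the cokernel; since source and target both have dimension $49$ this is harmless, but the identification is with the cokernel in both cases.
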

The proof of the theorem will be discussed in Sections 4 and 6. In the next section, we will see more conjectures on the length of the linear strands of the syzygy resolution and the $\widetilde{N}_p$ property.
\section{\label{S4}Lange-Sernesi vector bundle method}
In this section we give explanations on the generic projection of a generic canonical curve being cut out by quadrics for $g\geq9$. 

As before assume that $C\subset\mathbb{P}^{g-1}$ with $g\geq9$ is a generic canonical curve. Let \mbox{$p_0=[1:0:\cdots:0]$} be a general point which the curve is projected away from, to the dual hyperplane $H=\{x_0=0\}$. Let $V=\mathrm{span}\{x_1,\cdots,x_{g-1}\}$ while $\{x_0,\cdots,x_{g-1}\}$ is a basis of $H^0(\omega_C)$. Let $\pi_{p_0}:C\to C'\subset H$ be the projection, defined by dropping $x_0$. Assume $Y=\bigcap\limits_{Q\in I_{C'}(2)}V(Q)$ the intersection of all quadrics on $H$ containing $C'$. Clearly $Y\supset C'$. Assume that $Y\not=C'$. Then there exists $p\in Y\setminus C'$. Without loss of generality, we may assume that $p=[1:0:\cdots:0]\in H$.

We have a commutative diagram\\
\xymatrix{
&V\otimes V\ar[rd]^\mu \ar[d]_s & &\\
&S^2V \ar[r]^\rho &H^0(2\omega_C) \ar[r]^{\quad \quad ev_p} &k
}\\
where $ev_p$ is defined to be the evaluation of the representative in $S^2V$ of a quadric form at $p$. It is well-defined because $p\in Y$, which implies that $ev_p(I_{C'}(2))=0$. Note that $$\eta:=ev_p\in H^0(2\omega_C)^\vee\cong\mathrm{Ext}^1(\omega_C,\mathcal{O}_C).$$ Hence there is a rank $2$ vector bundle $E$ fitting into the exact sequence:
\begin{align}
0\to\mathcal{O}_C\to E\to\omega_C\to0.\label{X1}
\end{align}
Since we have the commutative diagram\\
\xymatrix{
V\otimes V\ar@{^{(}->}[r]^-i \ar[rd]_\mu & H^0(\omega_C)\otimes H^0(\omega_C) \ar[d]^m\\
&H^0(2\omega_C)
}\\
$\eta m$ defines a morphism $H^0(\omega_C)\otimes H^0(\omega_C)\to k$, which corresponds to $\delta_\eta:H^0(\omega_C)\to H^0(\omega_C)^\vee$. $\delta_\eta$ is exactly the connection map of the induced long exact sequence of \eqref{X1} on cohomology. 

Note that $\eta$ can be seen as an element in $\mathbb{P}H^0(2\omega_C)^\vee$. Hence we can find a secant variety $\Sigma_{d-1}(C)\subset\mathbb{P}H^0(2\omega_C)^\vee$ such that $[\eta]\in\Sigma_{d-1}(C)$. In particular, $\eta$ is in the span of an effective divisor $D$ of degree $d$ under the map $\phi_{|2\omega_C|}$. Here I follow the conventions on secant varieties as in \cite{ENP}, meaning $\Sigma_0(C)=C$, which is different from Lange's. Since $\eta$ is on $\langle D\rangle_{\phi_{|2\omega_C|}}$, we have
$$\eta\in\ker(H^0(2\omega_C)^\vee\to H^0(2\omega_C-D)^\vee).$$
By dimension counting such $D$ definitely exists if $d\geq\frac{3g-3}{2}$. Write $A:=\omega_C(-D)$. Then by \cite{L1} there is an exact sequence:
$$0\to A\to E\to D\to 0.$$
We are going to estimate $h^0(E)$ from which the cohomological information of $A$ as well as $D$ would be deduced, via analysis on the quadric generators of $I_C$.

From what have been discussed in Sections 2 and 3, the quadric generators of $I_C$ are of the following forms:
\begin{eqnarray*}
  F_0&=&x_0^2+P_0(x_1,\cdots,x_{g-1}) \\
  F_1 &=& x_0x_1+P_1(x_1,\cdots,x_{g-1}) \\
   &\vdots&  \\
  F_{g-1}&=& x_0x_{g-1}+P_{g-1}(x_1,\cdots,x_{g-1}) \\
  F_g&=& P_g(x_1,\cdots,x_{g-1})\\
  &\vdots&\\
  F_{N}&=&P_{N}(x_1,\cdots,x_{g-1}),
\end{eqnarray*}
where $N=\frac{(g-1)(g-4)}{2}$, $F_g,\cdots,F_N$ are quadric generators of $I_{C'}$ and \mbox{$V(F_g)\cap\cdots\cap V(F_N)=Y\ni p$}. So no $x_1^2$ appear in $F_g,\cdots,F_N$. On the other hand, since $p\notin C'$, there must be $x_1^2$ \mbox{appearing} in $F_0,\cdots,F_{g-1}$. By applying a coordinate change among $x_2,\cdots,x_{g-1}$, we may rewrite $F_0,\cdots,F_{g-1}$ in the following forms:
\begin{eqnarray*}
  F_0&=&x_0^2+a_0x_1^2+P_0(x_1,\cdots,x_{g-1}) \\
  F_1 &=& x_0x_1+a_1x_1^2+P_1(x_1,\cdots,x_{g-1}) \\
  F_2 &=& x_0x_2+a_2x_1^2+P_2(x_1,\cdots,x_{g-1}) \\
  F_3 &=& x_0x_3+P_3(x_1,\cdots,x_{g-1}) \\
   &\vdots&  \\
  F_{g-1}&=& x_0x_{g-1}+P_{g-1}(x_1,\cdots,x_{g-1}) \\
  F_g&=& P_g(x_1,\cdots,x_{g-1})\\
  &\vdots&\\
  F_{N}&=&P_{N}(x_1,\cdots,x_{g-1}),
\end{eqnarray*}

Observe that $x_3,\cdots,x_{g-1}\in\ker(\delta_\eta:H^0(\omega_C)\to H^0(\omega_C)^\vee)$, because for $3\leq i\leq g-1$, $\delta_\eta(x_i)(x_0)=\eta(x_ix_0)=\eta(-P_i)=-P_i(p)=0$, while $\delta_\eta(x_i)(x_j)=0$ is simply verified using $x_ix_j(p)$ for $j\not=0$, by the definition of $\eta$. We conclude that $\dim\ker(\delta_\eta)\geq g-3$.

We have obtained two exact sequences:
\begin{align}
0\to\mathcal{O}_C\to E\to\omega_C\to0,\label{X2}\\
0\to A\to E\to D\to 0.\label{X3}
\end{align}
The sequence \eqref{X2} implies that $h^0(\mathcal{O}_C)+h^0(\omega_C)=h^0(E)+\dim\mathrm{Im}(\delta_\eta)$. It holds \mbox{$\dim\mathrm{Im}(\delta_\eta)\geq3$} since $\dim\ker(\eta)\leq g-3$. Hence $h^0(E)\geq g-2$. Similarly from \eqref{X3} it is induced that $$h^0(A)+h^0(D)\geq h^0(E)\geq g-2.$$ Combining this with the Riemann-Roch Theorem, we see that $\mathrm{Cliff}(A)=\mathrm{Cliff}(D)\leq 3$, meaning that 
$$\deg(D)-2h^0(D)+2\leq3,$$
$$\deg(A)-2h^0(A)+2\leq3.$$
Recall that such a $D$ always exists if $\deg(D)\geq\frac{3g-3}{2}$. In particular, one can choose \mbox{$\deg(D)=\lfloor\frac{3g-1}{2}\rfloor$}, in which case $\deg(A)=\lfloor\frac{g-1}{2}\rfloor$. So we have
$$h^0(D)\geq\frac{\deg(D)-1}{2}\geq\frac{\frac{3g-3}{2}-1}{2}=\frac{3g-5}{4},$$
and similarly
$$h^0(A)\geq\frac{\deg(A)-1}{2}\geq\frac{\frac{g-2}{2}-1}{2}=\frac{g}{4}-1.$$
It is easy to find that for $g\geq9$, we have both $h^0(D)$ and $h^0(A)$ are at least $2$. Then $D$ as well as $A$ contributes to the Clifford index. However, for a generic curve $C$, we have $\mathrm{Cliff}(C)\geq\lfloor\frac{g-1}{2}\rfloor\geq4$, while $\mathrm{Cliff}(A)=\mathrm{Cliff}(D)\leq 3$, which is a contradiction. The discussions above prove the main theorem in this section:
\begin{thm}
  Assume that $g\geq9$. Let $C\hookrightarrow\mathbb{P}^{g-1}$ be a generic canonical curve of genus $g$ and $p_0\in\mathbb{P}^{g-1}$ a generic point. Then the projection $C'$ of $C$ away from $p_0$ is cut out by quadrics
\end{thm}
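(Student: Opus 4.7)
The plan is to argue by contradiction, starting from the set-theoretic statement of Theorem 3.1, which already pins $C'$ down as an irreducible component of $Y := \bigcap_{Q \in I_{C'}(2)} V(Q)$. If $Y \neq C'$ as sets, I would pick a point $p \in Y \setminus C'$ and, after a coordinate change, place it at $[1:0:\cdots:0]$ inside the projection hyperplane $H$. Evaluation at $p$ of a symmetric representative then descends to a well-defined linear form $\eta \colon H^0(2\omega_C) \to k$ annihilating $I_{C'}(2)$; identifying $H^0(2\omega_C)^\vee$ with $\mathrm{Ext}^1(\omega_C, \mathcal{O}_C)$ via Serre duality, $\eta$ becomes the extension class of a rank-$2$ bundle $E$ fitting in $0 \to \mathcal{O}_C \to E \to \omega_C \to 0$.

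The next input is the Lange-Sernesi realization of $\eta$ on a secant of $C$. By a dimension count, $[\eta]$ lies on $\Sigma_{d-1}(C) \subset \mathbb{P}H^0(2\omega_C)^\vee$ as soon as $d \geq \tfrac{3g-3}{2}$, so $\eta$ is in the span of an effective divisor $D$ of degree $d$ and one gets a companion sequence $0 \to A \to E \to \mathcal{O}_D \to 0$ with $A = \omega_C(-D)$. Taking $d = \lfloor \tfrac{3g-1}{2} \rfloor$ should be the right balance: Riemann-Roch then forces $h^0(D) \geq \tfrac{3g-5}{4}$ and $h^0(A) \geq \tfrac{g}{4}-1$, both at least $2$ for $g \geq 9$, so $A$ and $D$ both contribute nontrivially to the Clifford index.

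The main obstacle, and the place where the special geometry of the projection enters, is a lower bound on $h^0(E)$, equivalently an upper bound on the rank of the coboundary $\delta_\eta \colon H^0(\omega_C) \to H^0(\omega_C)^\vee$. I would exploit the normal form of the quadric generators of $I_C$ built up in Sections 2 and 3: after the coordinate change, only $x_0, x_1, x_2$ can contribute an $x_1^2$ term to the relations that mix $x_0$ with the remaining variables, so for $3 \leq i \leq g-1$ one has $\delta_\eta(x_i)(x_j) = \eta(x_i x_j) = 0$ for every $j$. This will give $\dim \ker(\delta_\eta) \geq g-3$, and consequently $h^0(E) \geq g-2$, so from the companion sequence $h^0(A) + h^0(D) \geq g-2$, which forces $\mathrm{Cliff}(A) = \mathrm{Cliff}(D) \leq 3$.

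The contradiction then comes from the Clifford index of a generic curve: $\mathrm{Cliff}(C) \geq \lfloor (g-1)/2 \rfloor \geq 4$ for $g \geq 9$, incompatible with Clifford index $\leq 3$ realized by $A$ or $D$. Hence $Y = C'$ as sets. To upgrade this to a scheme-theoretic statement I would invoke \v{S}okurov's Lemma 3.3: Theorem 3.1 already gives generic smoothness of $Y$ along $C'$, so a general hyperplane meets $Y$ in a reduced $0$-dimensional subscheme equal to $H \cap C'$, and the lemma then propagates reducedness to all of $Y$, proving that $C'$ is cut out by quadrics.
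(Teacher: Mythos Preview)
Your proposal is correct and follows essentially the same approach as the paper: the contradiction via the Lange--Sernesi extension bundle, the normal form of the quadric generators yielding $\dim\ker(\delta_\eta)\geq g-3$ and hence $h^0(E)\geq g-2$, the choice $d=\lfloor\tfrac{3g-1}{2}\rfloor$, and the resulting Clifford-index contradiction are all exactly as in Section~4. Two small remarks: the companion sequence should read $0\to A\to E\to \mathcal{O}_C(D)\to 0$ (a line bundle, not the torsion sheaf $\mathcal{O}_D$), and the bounds $h^0(D)\geq\tfrac{3g-5}{4}$, $h^0(A)\geq\tfrac{g}{4}-1$ are consequences of $\mathrm{Cliff}\leq 3$ rather than of Riemann--Roch alone, so logically they belong after your third paragraph; your explicit invocation of \v{S}okurov's lemma for the scheme-theoretic upgrade is a welcome clarification the paper leaves to the discussion in Section~3.
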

\begin{rmk}
The technique developed above fails to explain the case of $g=8$, as $D$ can be chosen to be of degree 11 and $A$ to be of degree $3$ with $h^0(A)=1$, in which case there is no contradiction. The choice of $\deg(D)$ was the safest but far from the best.
\end{rmk}
\section{\label{S5}Paracanonical curves}
In this section we investigate on the generic projection of a generic paracanonical curve. Recall that a paracanonical curve of genus $g$ is embedded in $\mathbb{P}^{g-2}$ via the line bundle $\omega_C\otimes\eta$, where $\eta$ is a generic non-trivial $l$-torsion in $\mathrm{Pic}^0(C)$. Note that by Theorem 1 of \cite{GL86}, a generic paracanonical curve of genus at least $7$ is projectively normal. Note that the line bundle $\omega_C\otimes\eta$ is very ample if and only if $\eta\notin C_2-C_2$ by \cite{Har} IV Proposition 3.1. First we look at the case that the genus $g$ is odd. It was proved in \cite{FARKAS2003553} that for $g=2i+1$, the difference variety $C_i-C_i$ is identified with
$$\{\xi\in\mathrm{Pic}^0(C)|K_{i,1}(C;\xi,\omega_C)\not=0\}$$ 
and further in \cite{CEFS} that with the same notations $K_{i,1}(C;\eta,\omega_C)=0$ for a general level $l$ paracanonical curve $[C,\eta]\in\mathcal{R}_{g,l}$. As a result, the condition $\eta\notin C_2-C_2$ holds for a general torsion line bundle $\eta$ of level $l$ for each $l\geq2$. For the case that $g$ is even, the locus of torsion bundles is dense in the Jacobian of the curve, therefore they are not in $C_2-C_2$ generically for $g\geq6$. 

Using the same method as Theorem 2.1, we can show that
\begin{thm}Let $C\subset\mathbb{P}^{g-2}=\mathbb{P}H^0(\omega_C\otimes\eta)^\vee$, where $\eta$ is a generic non-trivial torsion line bundle, be a generic paracanonical curve of genus $g\geq8$. For a generic $p\in\mathbb{P}^{g-1}$, let $C'$ be the projection of $C$ away from $p$ to a hyperplane $H=\mathbb{P}V$. Then $\dim I_{C'}(2)=\frac{(g-1)(g-8)}{2}$ and $C'$ is $2$-normal.
\end{thm}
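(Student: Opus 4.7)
The plan is to mirror the proof of Theorem 2.1 with $L := \omega_C \otimes \eta$ playing the role of $\omega_C$. First I would gather the Riemann--Roch data. Because $\eta$ is a non-trivial torsion line bundle, $h^1(L) = h^0(\eta^{-1}) = 0$, whence $h^0(L) = g-1$ (consistent with the ambient $\mathbb{P}^{g-2}$), and $\deg L^{\otimes 2} = 4g-4 > 2g-2$ forces $h^0(L^{\otimes 2}) = 3g-3$ regardless of the level of $\eta$. Combining this with the projective normality of $C$ in $\mathbb{P}^{g-2}$ (Theorem 1 of \cite{GL86}, recalled at the opening of this section), the multiplication $\mathrm{Sym}^2 H^0(L) \to H^0(L^{\otimes 2})$ is surjective, so
$$\dim I_C(2) = \binom{g}{2} - (3g-3) = \frac{(g-1)(g-6)}{2}.$$

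Next I would transcribe the matrix manipulation of Theorem 2.1 almost verbatim. After a linear change of coordinates put $p = [1:0:\cdots:0]$ and $H = \{x_0 = 0\}$, so that $I_{C'}(2) = I_C(2) \cap k[x_1,\ldots,x_{g-2}]$. Choose a basis $F_1,\ldots,F_N$ of $I_C(2)$ with $N = \frac{(g-1)(g-6)}{2}$ and assume the Jacobian matrix $\mathrm{Jac}(F)(p)$ has full rank $g-1$. Elementary row operations then bring $g-1$ of the generators into the form $x_0 x_i + G_i(x_1,\ldots,x_{g-2})$ and leave the remaining $N-(g-1)$ free of $x_0$; the latter form a basis of $I_{C'}(2)$, yielding
$$\dim I_{C'}(2) = \frac{(g-1)(g-6)}{2} - (g-1) = \frac{(g-1)(g-8)}{2}.$$
The hypothesis $g\geq 8$ is exactly what guarantees $N \geq g-1$, so that the elimination can be carried out. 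The genericity of the full-rank condition is shown by the same argument as in Theorem 2.1: its failure locus is the union of the zero loci of the $(g-1)$-minors of $\mathrm{Jac}(F)$, each of which is of codimension one in $\mathbb{P}^{g-2}$ because the $F_i$ are linearly independent.

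Finally, for $2$-normality I use the exact sequence on $\mathbb{P}V = \mathbb{P}^{g-3}$,
$$0 \to \mathcal{I}_{C'}(2) \to \mathcal{O}_{\mathbb{P}V}(2) \to \mathcal{O}_{C'}(2) \to 0,$$
take global sections, and read off
$$\dim \mathrm{Im}\bigl(\mathrm{Sym}^2 V \to H^0(\mathcal{O}_{C'}(2))\bigr) = \binom{g-1}{2} - \frac{(g-1)(g-8)}{2} = 3(g-1) = 3g-3.$$
Since $H^0(C',\mathcal{O}_{C'}(2))$ injects into $H^0(C,L^{\otimes 2})$ and the latter also has dimension $3g-3$, the multiplication map is surjective, which is the desired $2$-normality. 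The argument is essentially a transcription of Theorem 2.1; I do not anticipate any real obstacle. The only genuinely paracanonical-specific ingredients are the non-speciality of $L$ and $L^{\otimes 2}$ (both automatic from the non-triviality of $\eta$ and the degree bound $4g-4 > 2g-2$) and Green--Lazarsfeld's projective normality result, and the new threshold $g \geq 8$ (in place of $g \geq 6$) simply reflects that $\dim I_C(2)$ is now $\frac{(g-1)(g-6)}{2}$ rather than $\frac{(g-2)(g-3)}{2}$, so one needs $g \geq 8$ to accommodate the $g-1$ rows that must be eliminated.
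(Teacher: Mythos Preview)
Your proposal is correct and is exactly the approach the paper takes: the paper simply states ``Using the same method as Theorem 2.1'' and then in Remark 5.2 records the computation $\dim I_C(2)=\binom{g}{2}-(3g-3)=\frac{(g-1)(g-6)}{2}$ and $\dim I_{C'}(2)=\frac{(g-1)(g-6)}{2}-(g-1)=\frac{(g-1)(g-8)}{2}$, explaining that $g\geq 8$ is needed so that there are at least $g-1$ quadrics to eliminate. You have faithfully spelled out the details the paper leaves implicit, including the Riemann--Roch count for $h^0(L^{\otimes 2})$, the invocation of Green--Lazarsfeld for projective normality, and the $2$-normality check via dimension counting.
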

\begin{rmk}
Like the assumption $g\geq6$ in Theorem 2.1, the assumption $g\geq8$ here ensures that there are enough linearly independent quadrics containing $C$. Concretely speaking, the dimension of $I_2(C)$ is $\binom{g}{g-2}-(3g-3)=\frac{(g-1)(g-6)}{2}$. Hence the expected dimension of $I_2(C')$ is $\frac{(g-1)(g-6)}{2}-(g-1)=\frac{(g-1)(g-8)}{2}$. The expected dimension can be reached if and only if $g\geq8$. 
\end{rmk}

Hence we have the elementary observations as follows:
\begin{prop}
	Let $C\subset\mathbb{P}^{g-2}$ be a generic paracanonical curve of genus $g\geq8$ and $p\in\mathbb{P}^{g-2}$. Then the Betti diagram of the projection $C'$ away from $C$ satisfies:\\
	(1)$\widetilde{b}_{i,j}=0$ for $j\geq3$;\\
	(2)$\widetilde{b}_{i,2}=0\iff i\geq g-2$ and $\widetilde{b}_{g-3,2}=1$;\\
	(3)$\widetilde{b}_{g-4,2}=2g-2$.
\end{prop}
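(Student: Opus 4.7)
The plan is to run the exact Koszul-cohomological strategy of Theorem 2.2 in the paracanonical setting: replace $\omega_C$ by $L=\omega_C\otimes\eta$, use the bundle $M_V$ defined by $0\to M_V\to V\otimes\mathcal{O}_C\to L\to 0$, which now has rank $g-3$ and determinant $L^{-1}$, and note the identification $\bigwedge^{i}M_V^{\vee}\cong\bigwedge^{g-3-i}M_V\otimes L$ replacing the canonical-curve identity $\bigwedge^{i}M_V^{\vee}\cong\bigwedge^{g-2-i}M_V\otimes\omega_C$. Modulo this shift in rank and determinant, every step of Theorem 2.2 has a direct analogue.

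For (1), the same iteration of Serre duality and wedge short exact sequences used in Theorem 2.2 gives, for $j\geq 3$,
$$\widetilde{K}_{i,j}(C,L;V)^{\vee}\cong H^{0}\bigl(\textstyle\bigwedge^{g-4-i}M_V\otimes L^{2-j}\otimes\omega_C\bigr).$$
For $j\geq 4$ the line bundle $L^{2-j}\otimes\omega_C$ has negative degree, so its $H^0$ vanishes. The decisive new feature appears at $j=3$: whereas in the canonical case $L^{-1}\otimes\omega_C=\mathcal{O}_C$ produces the term $\widetilde{b}_{g-3,3}=1$, here $L^{-1}\otimes\omega_C=\eta^{-1}$ is a non-trivial torsion bundle, so $H^{0}(\eta^{-1})=0$. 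Feeding this vanishing into one further wedge sequence kills $H^{0}(\bigwedge^{g-4-i}M_V\otimes\eta^{-1})$ for every $i$, which proves (1).

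The $j=2$ case is where $S(C')_1=V\subsetneq H^{0}(L)$ has to be handled by hand, and it is the heart of the argument. Serre duality gives
$$\widetilde{K}_{i,2}(C,L;V)^{\vee}\cong\ker\bigl(H^{1}(\textstyle\bigwedge^{g-3-i}M_V\otimes\eta^{-1})\longrightarrow \textstyle\bigwedge^{g-3-i}V\otimes V^{\vee}\bigr).$$
The vanishing in (2) for $i\geq g-2$ is then immediate from $\bigwedge^{g-3-i}M_V=0$. For $i=g-3$ the expression reduces to $\ker(H^{1}(\eta^{-1})\to V^{\vee})$; via $H^{1}(\eta^{-1})\cong H^{0}(L)^{\vee}$ the map is dual to the inclusion $V\hookrightarrow H^{0}(L)$, hence surjective with $1$-dimensional kernel, yielding $\widetilde{b}_{g-3,2}=1$.

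For (3), at $i=g-4$, the long exact sequence attached to $0\to M_V\otimes\eta^{-1}\to V\otimes\eta^{-1}\to\omega_C\to 0$ reads
$$0\to H^{0}(\omega_C)\to H^{1}(M_V\otimes\eta^{-1})\to V\otimes H^{0}(L)^{\vee}\to k\to 0.$$
Following the commutative-diagram argument used in Theorem 2.2 for the analogous position $i=g-3$ there, I would compose with the restriction map $V\otimes H^{0}(L)^{\vee}\to V\otimes V^{\vee}$ and verify that the $(g-2)$-dimensional subspace $V\otimes\operatorname{ann}(V)\subset V\otimes H^{0}(L)^{\vee}$ (which lies in the kernel of the downward map) is contained in the image of $H^{1}(M_V\otimes\eta^{-1})$. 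Granting this, the kernel of the composite has dimension $\dim H^{0}(\omega_C)+(g-2)=2g-2$, giving $\widetilde{b}_{g-4,2}=2g-2$. The main pitfall is the careful bookkeeping of ranks, determinants and signs when transferring the canonical-case identities to the rank-$(g-3)$ setting; once those normalizations are fixed, the remaining steps are a direct Riemann--Roch and dimension count.
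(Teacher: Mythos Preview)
Your proposal is correct and takes exactly the approach the paper intends: the paper's own proof is simply ``Similar to that of Theorem 2.2'', and you have correctly filled in the analogy, including the two essential shifts (the rank of $M_V$ drops to $g-3$, so $\bigwedge^{i}M_V^{\vee}\cong\bigwedge^{g-3-i}M_V\otimes L$, and $L^{-1}\otimes\omega_C=\eta^{-1}$ now has no global sections, which is precisely what kills the entire $j=3$ row). Your treatment of $\widetilde{b}_{g-4,2}$ via $V\otimes\mathrm{ann}(V)\subset\ker(ev)=\mathrm{Im}\bigl(H^{1}(M_V\otimes\eta^{-1})\to V\otimes H^{0}(L)^{\vee}\bigr)$ is a clean repackaging of the diagram chase in Theorem 2.2 and yields the correct count $g+(g-2)=2g-2$.
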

\begin{proof}
	Similar to that of Theorem 2.2.
\end{proof}
Applying the same method as in Section 4, we deduce the following theorem:
\begin{thm}Assume that $C\subset\mathbb{P}^{g-2}$ is a generic paracanonical curve of genus $g\geq13$ with $\eta$ a generic non-trivial torsion line bundle and $p=[1:0:\cdots:0]\in\mathbb{P}^{g-2}$ a generic point. Then the projection $C'$ of $C$ away from $p$ is cut out by quadrics. 
\end{thm}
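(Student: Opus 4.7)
The strategy is to mirror Section 4, with $L := \omega_C \otimes \eta$ in place of $\omega_C$ and the identification $\mathrm{Ext}^1(L, \eta^{-1}) \cong H^0(L^{\otimes 2})^{\vee}$ in place of $\mathrm{Ext}^1(\omega_C, \mathcal{O}_C)$. By \v{S}okurov's Lemma 3.3 together with the paracanonical analog of Theorem 3.1 (proved by substituting Theorem 5.1 for Theorem 2.1 throughout and using $g \geq 13$ to force the relevant Jacobian rank), it suffices to show that $Y := \bigcap_{Q \in I_{C'}(2)} V(Q)$ coincides with $C'$ on the level of sets. Suppose for contradiction that some $p \in Y \setminus C'$ exists; after a linear change we may take $p = [1:0:\cdots:0] \in H = \mathbb{P}V$, where $V = \mathrm{span}\{x_1, \cdots, x_{g-2}\}$ and $\{x_0, \cdots, x_{g-2}\}$ is a basis of $H^0(L)$.

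Since $p \in Y$ and $C'$ is $2$-normal by Theorem 5.1, evaluation at $p$ descends from $S^2 V$ to a linear form $\xi : H^0(L^{\otimes 2}) \to k$. Under Serre duality this gives an extension class in $\mathrm{Ext}^1(L, \eta^{-1})$, hence a rank-$2$ vector bundle $E$ of determinant $\omega_C$ fitting into
$$0 \to \eta^{-1} \to E \to L \to 0.$$
The coboundary $\delta_\xi : H^0(L) \to H^1(\eta^{-1}) \cong H^0(L)^{\vee}$ is the map $v \mapsto (w \mapsto \xi(vw))$. Normalizing the quadric generators of $I(C)$ exactly as in Section 4 brings the $g - 1$ generators involving $x_0$ into the form $F_0 = x_0^2 + a_0 x_1^2 + P_0$, $F_j = x_0 x_j + a_j x_1^2 + P_j$ for $j = 1, 2$, and $F_i = x_0 x_i + P_i$ for $3 \leq i \leq g-2$, with each $P_i \in k[x_1, \cdots, x_{g-2}]$ free of the $x_1^2$ monomial. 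A direct check then gives $\xi(x_i x_j) = 0$ for every $j \in \{0, 1, \cdots, g-2\}$ whenever $3 \leq i \leq g-2$, so $x_3, \cdots, x_{g-2} \in \ker \delta_\xi$. Consequently $\dim \mathrm{Im}(\delta_\xi) \leq 3$ and
$$h^0(E) = h^0(\eta^{-1}) + h^0(L) - \dim \mathrm{Im}(\delta_\xi) \geq 0 + (g - 1) - 3 = g - 4.$$

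Next, $\phi_{|L^{\otimes 2}|}$ embeds $C$ in $\mathbb{P}^{3g-4}$, so the $d$-th secant variety fills this projective space for any $d \geq \lceil (3g-3)/2 \rceil$. Taking $d$ equal to this value I obtain an effective divisor $D$ of degree $d$ with $\xi \in \langle \phi_{|L^{\otimes 2}|}(D) \rangle$ and, by Lange's construction \cite{L1}, a second exact sequence $0 \to A \to E \to \mathcal{O}_C(D) \to 0$. Since $\det E = \omega_C$, necessarily $A = \omega_C(-D)$ and $\deg A = \lfloor (g-1)/2 \rfloor$. The identity $A \otimes \mathcal{O}_C(D) = \omega_C$ forces $\mathrm{Cliff}(A) = \mathrm{Cliff}(\mathcal{O}_C(D)) = g + 1 - (h^0(A) + h^0(D))$, and combining with $h^0(A) + h^0(D) \geq h^0(E) \geq g - 4$ yields $\mathrm{Cliff}(A) \leq 5$. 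A straightforward Riemann-Roch calculation using $h^0(A) - h^0(D) = g - 1 - d$ confirms $h^0(A), h^0(D) \geq 2$ once $g \geq 13$, so both line bundles genuinely compute the Clifford index, yet the generic-curve bound $\mathrm{Cliff}(C) \geq \lfloor (g-1)/2 \rfloor \geq 6$ is violated. The main obstacle is precisely this Clifford-index accounting: because $h^0(L) = g - 1$ in the paracanonical setting (rather than $g$), the coboundary argument only yields $h^0(E) \geq g - 4$ instead of $g - 2$, which is exactly why the threshold jumps from $g \geq 9$ in the canonical case to $g \geq 13$ here.
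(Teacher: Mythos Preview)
Your argument follows the paper's approach closely and is essentially correct, but there is one slip in the identification of the line bundles appearing in Lange's second exact sequence.

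You assert that Lange's construction yields $0 \to A \to E \to \mathcal{O}_C(D) \to 0$ with $A = \omega_C(-D)$. This is what happens in the canonical case of Section~4, but here the original extension is $0 \to \eta^{-1} \to E \to L \to 0$ with $L = \omega_C \otimes \eta$. The condition $\xi \in \langle D\rangle_{\phi_{|L^{\otimes 2}|}}$ translates, via Serre duality, into the vanishing of the image of $\xi$ under $\mathrm{Ext}^1(L,\eta^{-1}) \to \mathrm{Ext}^1(L(-D),\eta^{-1})$, so what lifts to a subbundle of $E$ is $L(-D) = \omega_C \otimes \eta(-D)$, and the resulting quotient is $\eta^{-1}(D)$. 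Thus the correct second sequence is
\[
0 \to \omega_C \otimes \eta(-D) \to E \to \eta^{-1}(D) \to 0,
\]
exactly as in the paper. Your version $0 \to \omega_C(-D) \to E \to \mathcal{O}_C(D) \to 0$ does not follow from the construction, and in particular the inequality $h^0(\omega_C(-D)) + h^0(\mathcal{O}_C(D)) \geq h^0(E)$ is not justified as written.

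Fortunately this is only a labeling error: since $\deg \eta = 0$, the line bundles $D' := \eta^{-1}(D)$ and $A' := \omega_C(-D') = \omega_C \otimes \eta(-D)$ have the same degrees as your $\mathcal{O}_C(D)$ and $\omega_C(-D)$, and they still satisfy $A' \otimes D' \cong \omega_C$. Hence the Clifford identity $\mathrm{Cliff}(A') = \mathrm{Cliff}(D') = g + 1 - (h^0(A') + h^0(D'))$, the bound $\mathrm{Cliff}\leq 5$, and the verification $h^0(A'), h^0(D') \geq 2$ for $g \geq 13$ all go through unchanged once the correct bundles are in place. With this correction your proof matches the paper's.
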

\begin{proof}
As before write $Y=\bigcap\limits_{Q\in I_{C'}(2)}V(Q)$ the intersection of all quadrics on $H$ containing $C'$ and assume that $Y\not=C'$. Then there exists $p\in Y\setminus C'$. Without loss of generality, we may assume that $p=[1:0:\cdots:0]\in H=\{x_0=0\}=\mathbb{P}^{g-3}$.

Then for $V=\{x_0=0\}$ we have a commutative diagram\\
\xymatrix{
&V\otimes V\ar[rd]^\mu \ar[d]_s & &\\
&S^2V \ar[r]^{\rho\quad\quad} &H^0(2(\omega_C\otimes\eta)) \ar[r]^{\quad \quad ev_p} &k
}\\
where $ev_p$ is defined to be the evaluation of a representative in $S^2V$ at $p$. Note that 
$$\theta:=ev_p\in H^0(2(\omega_C\otimes\eta))^\vee\cong\mathrm{Ext}^1(\omega\otimes\eta,\eta^\vee),$$ defining an exact sequence
\begin{align}
0\to\eta^\vee\to E\to\omega\otimes\eta\to0,\label{X4}
\end{align}
where $E$ is a rank $2$ vector bundle.

On the other hand, $[\theta]$ can be seen as an element in $\mathbb{P}^{3g-4}=\mathbb{P}H^0(2(\omega_C\otimes\eta))^\vee$. There exists a divisor $D$ on $C$ of degree $d$ such that $[\theta]\in\langle D\rangle_{\phi_{|2(\omega_C\otimes\eta)|}}$ if $d\geq\frac{3g-3}{2}$. Similar as in Section 5, one can deduce an exact sequence
\begin{align}
0\to\omega\otimes\eta(-D)\to E\to\eta^\vee(D)\to0.\label{X5}
\end{align}
The connection map of \eqref{X4} is $\delta_\theta:H^0(\omega_C\otimes\eta)\to H^0(\omega_C\otimes\eta)^\vee$, whose kernel has dimension at least $g-4$. Hence \eqref{X4} implies that $h^0(E)\geq g-4$. The exact sequence \eqref{X5} implies that $h^0(\omega_C\otimes\eta(-D))+h^0(\eta^\vee(D))\geq h^0(E)\geq g-4$. Then we know that $\mathrm{Cliff}(D')=\mathrm{Cliff}(A')\leq5$ if we write $D':=\eta^\vee(D)$ and $A':=\omega_C(-D')$.

Now we take $d=\lfloor\frac{3g-2}{2}\rfloor$. Then we obtain that $h^0(D')\geq\frac{d-3}{2}\geq\frac{3g-9}{4}>2$ and that $h^0(A')\geq\frac{2g-2-\frac{3g-2}{2}-3}{2}=\frac{g}{4}-2$, implying that $h^0(A')\geq2$ when $g\geq13$. Then $D'$ as well as $A'$ contributes to the Clifford index, but $\mathrm{Cliff}(C)=\lfloor\frac{g-1}{2}\rfloor\geq6\geq\mathrm{Cliff}(D')$, which is a contradiction.
\end{proof}
This gives the proof of the $g\geq13$ part of Theorem 1.5. For the cases $g=11,12$, please refer to the Section 6.
\section{\label{S6}Testing on Macaulay2 and open problems}
In this section, I claim some conjectures on the syzygies of the projection of a generic canonical curve, based on testing on \emph{Macaulay2}\cite{m2}. As mentioned in \cite{CEFS}, the vanishing of Betti numbers of a general genus $g$ curve can be verified via that of a $g$-nodal rational curve over a finite field, which can be seen as the mod $p$ reduction of a $g$-nodal rational curve over $\mathbb{Z}$. Since a $g$-nodal rational curve is the limit of a family of smooth curves of genus $g$ in the moduli space $\mathcal{M}_g$ of curves of genus $g$ and the dimension of the Koszul cohomology is upper semi-continuous, the vanishing of the betti numbers of the smooth curves can be verified from that of the nodal curves. I use the package \verb"NodalCurves.m2"\cite{NC} developed by Frank-Olaf Schreyer and used in \cite{CEFS}. The codes are as follows:
\begin{verbatim}
 --Genus of the curve. I take g=8 as an example. Its value can be modified.
 g=8

 --Generate the ideal of a rational g-nodal curve.
 (L,I)=randomCanonicalNodalCurve(g)

 --The coordinate ring of the hyperplane P^{g-2} where the projected curve C' lies.
 R=L_0[t_1..t_(g-1)]

 --The coordinate ring of P^{g-1} where the curve C lies.
 S=L_5
 
 --The natural inclusion map from R to S, mapping t_i to t_i.
 F=map(S,R,{t_1..t_(g-1)})
 
 --The ideal of C'.
 J=preimage(F,I)
 
 --The Betti diagram of the coordinate ring of C'.
 betti res J
\end{verbatim}
Compiling the codes above, the result is as follows:
\begin{verbatim}
        0 1  2  3  4 5 6
 total: 1 7 35 56 35 9 1
     0: 1 .  .  .  . . .
     1: . 7  .  .  . . .
     2: . . 35 56 35 8 1
     3: . .  .  .  . 1 .
\end{verbatim}
Together with Remark 1.4, this verifies Theorem 3.4 for $g=8$.

We can observe more information like the property $\widetilde{N}_p$ and the length of the linear strands of the free resolution of the ideal of the projected curve by doing more tests on higher genus curves.

When $g=9$, the output is:
\begin{verbatim}
        0  1  2  3  4   5  6 7
 total: 1 12 46 96 100 48 10 1
     0: 1  .  .  .  .   .  . .
     1: . 12 16  .  .   .  . .
     2: .  . 30 96 100 48  9 1
     3: .  .  .  .  .   .  1 .
\end{verbatim}
When $g=11$, the output is:
\begin{verbatim}
        0  1  2  3   4   5   6   7  8 9
 total: 1 25 80 182 350 400 245 80 12 1
     0: 1  .  .  .   .   .   .   .  . .
     1: . 25 80  70  .   .   .   .  .
     2: .  .  . 112 350 400 245 80 11 1
     3: .  .  .  .   .   .   .   .  1 .
\end{verbatim}
For $g=2k+1$, it can be observed that firstly, the property $\widetilde{N}_{k-3}$ holds and secondly the length of the linear strands is $k-2$, meaning that $\widetilde{b}_{p,1}\not=0$ if and only if $1\leq p\leq k-2$.

As a comparison, we look at the case that the genus $g\geq10$ is an even.

When $g=10$, the output is:
\begin{verbatim}
        0  1  2  3   4   5   6   7  8
 total: 1 18 43 127 210 162 63  10  .
     0: 1  .  .  .   .   .   .   .  .
     1: . 18 42  1   .   .   .   .  .
     2: .  .  1 126 210 162 63  10  1
     3: .  .  .  .   .   .   .   1  .
\end{verbatim}
When $g=12$, the output is:
\begin{verbatim}
        0  1  2   3   4   5   6   7   8  9 10
 total: 1 33 132 198 463 792 693 352 99 13  1
     0: 1  .  .   .   .   .   .   .   .  .  .
     1: . 33 132 198  1   .   .   .   .  .  .
     2: .  .  .   1  462 792 693 352 99 12  1
     3: .  .  .   .   .   .   .   .   .  1  .
\end{verbatim}
It can be observed that unlike the case of $g=8$, for even genus $g=2k\geq 10$ we have \mbox{$\widetilde{b}_{k-2,1}=\widetilde{b}_{k-3,2}=1$}. The curves satisfy $\widetilde{N}_{k-4}$ but not $\widetilde{N}_{k-3}$.

Recall that the rank of a syzygy $\gamma\in K_{p,1}(X,L)$ is the dimension of the unique linear subspace $W\subset H^0(X,L)$ of minimal dimension such that $\gamma\in K_{p,1}^S(X,L;W)$. In our setting, for a syzygy $\gamma\in \widetilde{K}_{p,1}(X,L;V)$, we consider the unique linear subspace $W\subset V$ of minimal dimension such that $\gamma$ is represented by an element in $\bigwedge\limits^pW\otimes V$, and define the dimension of $W$ to be the rank of $\gamma$. The rank of the extra syzygy can be tested as follows, taking $g=10$ as an example:
\begin{verbatim}
--Linear strand of the ideal J of C'
C=res(J,DegreeLimit=>1,LengthLimit=>3)

--The coefficient matrix of the inclusion of the extra syzygy
M=C.dd_3

--The Jacobian matrix
M=jacobian(transpose M)

--rank of the syzygy
rank M
\end{verbatim}
It turns out that when $g=10$, the output is $9$.

Summarizing the discussions above, we can claim the conjectures.
\begin{conjecture}
Let $C\hookrightarrow\mathbb{P}^{g-1}$ be a generic canonical curve of genus $g\geq9$ and $p\in\mathbb{P}^{g-1}$ a generic point. Then the projection $C'$ away from $p$ to a hyperplane has the following properties:\\
(1)$C'$ satisfies $\widetilde{N}_{\mathrm{Cliff}(C)-3}$.\\
(2)The length of the linear strands of the syzygy resolutions of $C'$ is $g-\mathrm{Cliff}(C)-3$, meaning that $\widetilde{b}_{t,1}=0$ if and only if $1\leq t\leq g-\mathrm{Cliff}(C)-3$.\\
(3)If $g=2k$, then $\widetilde{b}_{k-2,1}=\widetilde{b}_{k-3,2}=1$ and the extra syzygy is of maximal rank.
\end{conjecture}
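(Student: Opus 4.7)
The plan is to reduce everything to the Koszul cohomology of the complete canonical embedding $K_{p,q}(C,\omega_C)$, which is pinned down for a generic $C$ by Voisin's proof of Green's conjecture: $K_{p,1}(C,\omega_C)\neq 0$ exactly for $1\leq p\leq g-2-\mathrm{Cliff}(C)$ and $K_{p,2}(C,\omega_C)\neq 0$ exactly for $\mathrm{Cliff}(C)\leq p\leq g-3$. The information will be transferred through two long exact sequences. Picking $\ell\in H^0(\omega_C)\setminus V$ so that $W:=H^0(\omega_C)=V\oplus k\cdot\ell$, the inclusion of Koszul complexes $\bigwedge^\bullet V\otimes R(C)_*\hookrightarrow\bigwedge^\bullet W\otimes R(C)_*$ is a short exact sequence whose quotient identifies with $\bigwedge^\bullet V\otimes R(C)_*$ shifted by one in the exterior degree, producing
$$\cdots\to K^S_{p,q}(V)\to K_{p,q}(C,\omega_C)\to K^S_{p-1,q}(V)\xrightarrow{\cdot\bar\ell}K^S_{p-1,q+1}(V)\to K_{p-1,q+1}(C,\omega_C)\to\cdots$$
with connecting map multiplication by $\bar\ell\in W/V$. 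Separately, the inclusion of graded $S$-modules $S(C')\hookrightarrow M:=R(C)$ has cokernel a single copy of $k$ concentrated in degree $1$, giving
$$0\to\widetilde K_{p,1}(V)\to K^S_{p,1}(V)\to\bigwedge^p V\to\widetilde K_{p-1,2}(V)\to K^S_{p-1,2}(V)\to 0$$
and the identification $\widetilde K_{p,q}=K^S_{p,q}$ for $q\geq 3$.

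Parts (1) and (2) should follow by combining the two sequences with Voisin's vanishing in the ranges $p\leq\mathrm{Cliff}(C)-1$ and $p\geq g-1-\mathrm{Cliff}(C)$. Concretely, Voisin's bound $K_{p,2}(C,\omega_C)=0$ for $p\leq\mathrm{Cliff}(C)-1$, together with the regularity information of Theorem 2.2 that gives $K^S_{p,3}(V)=\widetilde K_{p,3}(V)=0$ for $p\neq g-3$, forces $K^S_{n,2}(V)=0$ for $n\leq\mathrm{Cliff}(C)-2$ via the first sequence. The second sequence then reduces the vanishing $\widetilde b_{n,2}=0$ in the range $n\leq\mathrm{Cliff}(C)-3$ to the surjectivity of the boundary map $K^S_{n+1,1}(V)\to\bigwedge^{n+1}V$. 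A symmetric analysis on the linear strand yields the predicted length $g-\mathrm{Cliff}(C)-3$, the ``loss of one'' coming from the fact that the rightmost linear syzygy of $C$ at bidegree $(g-2-\mathrm{Cliff}(C),1)$ is absorbed into the map $K^S_{n+1,1}(V)\to\bigwedge^{n+1}V$ rather than surviving into $\widetilde K$.

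For part (3), in the even-genus case $g=2k$ with $\mathrm{Cliff}(C)=k-1$, the linear and quadratic strands of $C$ meet at the sharp boundary $p=k-2$, and the four-term sequence above predicts exactly one extra cokernel producing both $\widetilde b_{k-2,1}=\widetilde b_{k-3,2}=1$. The equality of these two Betti numbers suggests a duality argument in the spirit of Green's canonical self-duality to identify the two extra syzygies as mutually dual. For the maximal-rank assertion I would follow the syzygy-scheme philosophy of Green, Ehbauer and Voisin: a syzygy of rank less than $g-1$ is supported on a proper subspace $V'\subsetneq V$, and such a reduction would correspond to an unexpected linear series on $C$ or an unexpected secant configuration through the projection centre $p$, contradicting the genericity of the pair $(C,p)$.

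The main technical obstacle is controlling the surjectivity of the boundary maps $K^S_{n+1,1}(V)\to\bigwedge^{n+1}V$: the mere vanishing of $K^S_{n,2}(V)$ is insufficient to conclude $\widetilde K_{n-1,2}(V)=0$, and the surjectivity ultimately requires geometric input from the projection itself. The deepest point is the maximal-rank assertion in (3): the standard geometric-syzygy inputs (Voisin's $K3$ argument, the $K_p$-theorem of Hirschowitz-Ramanan, the Ein-Lazarsfeld asymptotic machinery) are tailored to complete linear systems and do not transfer directly to the projected curve $C'\subset\mathbb{P}^{g-2}$. A plausible fallback is to degenerate to a $g$-nodal rational curve over $\mathbb{Z}$, exhibit the extra syzygy in explicit monomial coordinates, verify its rank on the degeneration, and propagate the bound back to the generic smooth curve by upper semicontinuity of Koszul cohomology, in the spirit of the \emph{Macaulay2} experiments of this section.
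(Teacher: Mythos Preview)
The statement you are attempting to prove is labelled \emph{Conjecture} in the paper, and it remains a conjecture there: the paper offers no proof, only computational evidence obtained by running the \texttt{NodalCurves} package on $g$-nodal rational curves over finite fields for small values of $g$ (namely $g=8,9,10,11,12$). There is therefore no ``paper's own proof'' to compare against.

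Your proposal is a reasonable strategic outline, and the two long exact sequences you write down are correct and are the natural tools. But what you have written is not a proof, and you yourself identify the two places where it breaks. First, the surjectivity of the boundary map $K^S_{n+1,1}(V)\to\bigwedge^{n+1}V$ is precisely the content of part (1) of the conjecture once the purely formal reductions are made; you cannot extract it from Voisin's theorem alone, because the first long exact sequence only relates $K^S_{\bullet,1}(V)$ to $K_{\bullet,1}(C,\omega_C)$ up to the same kind of unknown connecting map, and chasing the two sequences simultaneously returns you to the starting point. Second, for the maximal-rank claim in (3) you offer only a heuristic (``unexpected linear series or secant configuration'') and then retreat to the degeneration-plus-semicontinuity strategy. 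That strategy would at best verify (3) for each individual even $g$ by computer, exactly as the paper does; it does not yield a uniform argument, and in any case semicontinuity goes the wrong way for a \emph{lower} bound on the rank of a syzygy.

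In short: the framework is sound, the gaps you flag are the actual content of the conjecture, and neither you nor the paper closes them.
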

Still using the package \verb"NodalCurves.m2", the experiments about paracanonical curves for any level can be done. The codes are:
\begin{verbatim}
       --Ideal of a paracanonical curve C of genus g and level l
       (L,I)=randomPrymCanonicalNodalCurve(g,l)
       
       --Coordinate ring of the hyperplane where C' lies
       R=L_2[t_1..t_(g-2)]
       
       --Coordinate ring of Coordinate ring of the projective space where C lies
       S=L_7
       
       --The inclusion of R into S
       F=map(S,R,{t_1..t_(g-2)})
       
       --The ideal of C'
       J=preimage(F,I)
       
       --Betti diagram of the resolution of J
       betti res J
\end{verbatim}
It turns out that no matter what $l$ is, the Betti diagrams look the same for each genus. For example:\\
$g=8$:
\begin{verbatim}
        0  1  2  3  4 5
 total: 1 21 49 42 14 1
     0: 1  .  .  .  . .
     1: .  .  .  .  . .
     2: . 21 49 42 14 1
\end{verbatim}
$g=9$:
\begin{verbatim}
        0  1  2  3  4  5 6
 total: 1 20 70 96 60 16 1
     0: 1  .  .  .  .  . .
     1: .  4  .  .  .  . .
     2: . 16 70 96 60 16 1
\end{verbatim}
$g=10$:
\begin{verbatim}
        0  1  2  3   4   5  6 7
 total: 1 12 81 171 165 81 18 1
     0: 1  .  .  .   .   .  . .
     1: .  9  .  .   .   .  . . 
     2: .  3 81 171 165 81 18 1
\end{verbatim}
\begin{rmk}For $g=10$, there are enough quadrics containing $C'$, but $C'$ is still not cut out by quadrics. The numerical proof is not hard, by calculating the difference $\widetilde{b}_{1,2}-\widetilde{b}_{2,1}$. Repeating the process of the proof of Theorem 3.1, with the assumption $\eta\notin C_3-C_3$ generically, we can see that the intersections of the quadrics passing $C'$ contain $C'$ as an irreducible component. However, the pictures of other components are not clear.
\end{rmk}
For genus at least $11$, due to the limit of the CPU, I only show the first two columns of the Betti diagrams.

For $g=11$, the first two steps of resolutions are:
\begin{verbatim}
        0  1  2 
 total: 1 15 90 
     0: 1  .  .  
     1: . 15 20  
     2: .  . 70 
\end{verbatim}
For $g=12$, the first two steps are:
\begin{verbatim}
        0  1  2 
 total: 1 22 77 
     0: 1  .  .  
     1: . 22 55  
     2: .  . 22 
\end{verbatim}
With these results, it is not hard to check that $\tilde{b}_{3,1}=0$ for $g=11$ or $12$. We have the following conjecture:
\begin{conjecture}
  Assume that $C$ is a generic paracanonical curve of genus $g\geq11$, with the paracanonical bundle $\omega_C\otimes\eta$, where $\eta$ is a generic non-trivial torsion line bundle. Let \mbox{$p\in H^0(\omega_C\otimes\eta)^\vee$} be a generic point. Then the Betti diagram of the projection $C'$ of $C$ away from $p$ satisfies $\widetilde{N}_{\mathrm{Cliff}(C)-4}$ and the length of the linear strands is $\mathrm{Cliff}(C)-3$.
\end{conjecture}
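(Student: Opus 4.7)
The plan is to extend the Lange--Sernesi vector bundle strategy of Section~5 to higher syzygy positions, and to treat the length of the linear strand by a separate Koszul-duality argument combined with a scrollar non-vanishing construction. Since Proposition~5.3 already gives $\widetilde{b}_{i,j}=0$ for $j\geq 3$, the $\widetilde{N}_{\mathrm{Cliff}(C)-4}$ claim reduces to $\widetilde{b}_{i,2}=0$ for $1\leq i\leq \mathrm{Cliff}(C)-4$, which I would prove by induction on $i$ with Theorem~5.4 as the base case.

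The inductive step should go as follows. First, assuming a non-zero class $\gamma\in\widetilde{K}_{i,2}(C,\omega_C\otimes\eta;V)$, I would recast $\gamma$ via Koszul duality in the style of the proof of Theorem~2.2, writing it as a non-trivial element of a kernel of the form $\ker\bigl(H^1(\bigwedge^{g-3-i}M_V\otimes \eta^{-1})\to \bigwedge^{g-3-i}V\otimes H^1(\eta^{-1})\bigr)$. Second, using the geometric interpretation of Koszul classes through syzygy schemes, I would associate to $\gamma$ a rank-$(i+1)$ vector bundle $E$ on $C$ carrying a filtration whose top graded piece is $\omega_C\otimes\eta$ and whose bottom is $\eta^{\vee}$, generalising the extension \eqref{X4}. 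Third, choosing a minimal-degree effective divisor $D$ whose linear span (under $\phi_{|2(\omega_C\otimes\eta)|}$) contains the projective class of $\gamma$, one obtains a second filtration of $E$ generalising \eqref{X5}. Estimating $h^0(E)$ from both filtrations and applying Clifford's theorem to the resulting line sub-quotients should yield a pencil on $C$ of Clifford index strictly below $\lfloor (g-1)/2\rfloor$, contradicting the genericity of $[C,\eta]\in\mathcal{R}_{g,l}$ precisely in the range $i\leq \mathrm{Cliff}(C)-4$.

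For the length of the linear strand, the vanishing half $\widetilde{b}_{t,1}=0$ for $t\geq \mathrm{Cliff}(C)-2$ should be a formal consequence of the $\widetilde{N}_{\mathrm{Cliff}(C)-4}$ property combined with a Koszul-style pairing between $\widetilde{K}_{t,1}(\cdot;V)$ and a complementary $\widetilde{K}_{\bullet,2}$ twisted by $\eta^{-1}$, in analogy with the duality step used to bound Castelnuovo regularity in Theorem~2.2. The non-vanishing $\widetilde{b}_{\mathrm{Cliff}(C)-3,1}\neq 0$ is the harder half: the strategy would be to take a generic minimal pencil $|A|$ on $C$ with $\deg A=\mathrm{Cliff}(C)+2$ and produce, via the Green--Lazarsfeld scrollar construction applied to $\omega_C\otimes\eta$, an explicit linear syzygy in position $\mathrm{Cliff}(C)-3$ on $C\subset\mathbb{P}^{g-2}$, and then verify that for a generic projection centre $p$ this syzygy descends to a non-zero class on $C'$ (the descent being controlled by the same type of Jacobian argument as in the proof of Theorem~3.1).

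The principal obstacle is the higher-rank vector bundle construction underlying the inductive step: Section~5 exploits a very concrete extension class in $\mathrm{Ext}^1(\omega_C\otimes\eta,\eta^{\vee})$, whereas for $i\geq 2$ the corresponding data lives in higher $\mathrm{Ext}$ groups or in $H^1$ of syzygy bundles, and producing a bundle $E$ whose two filtrations are simultaneously well-behaved, in the sense that the graded pieces are honest line bundles contributing genuinely to the Clifford index, is already delicate for $i=2$ and requires a stability analysis absent from the quadric-generation case. A secondary difficulty is ensuring that the scrollar syzygy used for the non-vanishing half survives a generic projection; this calls for a transversality argument analogous to the dimension count of correspondences carried out in the proof of Theorem~3.1, and some care will be needed in the even-genus cases where the Macaulay2 data of Section~6 hints at extra syzygies parallel to Conjecture~6.1(3).
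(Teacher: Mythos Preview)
The statement you are attempting to prove is a \emph{conjecture} in the paper, not a theorem: the paper offers no proof whatsoever, only computational evidence from \emph{Macaulay2} runs on nodal curves for a handful of small genera. There is therefore no ``paper's own proof'' against which your proposal can be compared.

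What you have written is a research programme rather than a proof, and you correctly identify its principal gaps yourself. The most serious is the inductive step: the Lange--Sernesi method of Section~5 produces a rank-$2$ bundle from a single extension class in $\mathrm{Ext}^1(\omega_C\otimes\eta,\eta^\vee)$, and the passage to a rank-$(i+1)$ bundle carrying \emph{two} compatible filtrations whose line-bundle graded pieces all contribute to the Clifford index is not a routine generalisation. No such higher-rank construction appears anywhere in the paper, and the analogous problem for canonical curves (Conjecture~6.1) is likewise left open, so you cannot borrow machinery from elsewhere in the text. Your duality argument for the vanishing half of the linear-strand length is also underspecified: the self-duality used in Theorem~2.2 relies on $\omega_C$ being the canonical bundle, and for the incomplete series $V\subsetneq H^0(\omega_C\otimes\eta)$ with a non-trivial twist $\eta$ there is no evident pairing between $\widetilde{K}_{t,1}(C,\omega_C\otimes\eta;V)$ and a complementary $\widetilde{K}_{\bullet,2}$ space that would force the vanishing you need.

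In short, your outline is a reasonable first sketch of how one might attack the conjecture, but it is not a proof, and the paper does not claim to have one either.
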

\bibliography{projection}{}
\bibliographystyle{alpha}
\Addresses

\end{document}